\documentclass[reqno]{amsart}
\usepackage[textheight=22cm,textwidth=15cm,headsep=1cm,centering]{geometry}
\usepackage{amsmath,amssymb,amsthm,esint,xcolor}
\usepackage[hidelinks]{hyperref}

\newtheorem{theorem}{Theorem}[section]
\newtheorem{lemma}[theorem]{Lemma}
\newtheorem{definition}[theorem]{Definition}

\newtheorem{remark}[theorem]{Remark}

\begin{document}

\title[Liouville theorems for stable solutions]
{Liouville theorems for stable solutions to the mean curvature equation}
\author[Xu]{Fanheng Xu}
\address{School of Mathematics (Zhuhai), Sun Yat-Sen University, 519082, Zhuhai, P. R. China}
\email{xufh7@mail.sysu.edu.cn}
\thanks{Xu was supported by the National Natural Science Foundation of China (No.12201654)}

\subjclass[2020]{Primary 35J93, 35B53; Secondary 35J62}
\keywords{Mean curvature equation, stable solution, Liouville theorem, Joseph--Lundgren exponent, Sobolev critical exponent}

\begin{abstract}
We study stable solutions of the mean curvature equation
$$
-\operatorname{div}\left(\frac{\nabla u}{\sqrt{1+|\nabla u|^2}}\right)
=|u|^{p-1}u
$$
in domains of $\mathbb R^n$. We establish Liouville-type results in three settings.
First, for globally stable solutions in $\mathbb R^n$, we prove that $u\equiv0$ if $1<p<p_{JL}$, where $p_{JL}$ is the Joseph--Lundgren exponent.
Second, for solutions in $\mathbb R^n$ that are stable outside a compact set, we show that if $n>2$ and $p=p_S$, where $p_S$ is the Sobolev critical exponent, then $u\equiv0$. Third, for solutions of the Dirichlet problem on connected unbounded domains with smooth boundary satisfying an epigraph-type geometric condition, we prove that if $n>2$, $1<p\le p_S$, and the solution is stable outside a compact set, then $u\equiv0$. The proofs are based on a gradient estimate for stable solutions, an integrability result, conservation laws and Pohozaev-type identities, and a unique continuation argument. These results extend Liouville-type theorems for stable solutions from the semilinear Lane--Emden equation to the mean curvature equation.
\end{abstract}

\maketitle

\section{Introduction}

A natural model problem in the study of critical exponent phenomena is the Lane--Emden equation
\begin{align}\label{semilinear equation}
-\Delta u=|u|^{p-1}u.
\end{align}
For this equation, two distinguished exponents play a fundamental role,
namely the Sobolev critical exponent and the Joseph--Lundgren exponent.
The Sobolev exponent is defined by
\begin{align*}
p_S=
\begin{cases}
+\infty, & n=2,\\
\dfrac{n+2}{n-2}, & n>2,
\end{cases}
\end{align*}
and is closely related to the scaling invariance of (\ref{semilinear equation}) and the critical Sobolev embedding.
In particular,
the exponent $p_S$ separates the subcritical and supercritical regimes from the point of view of the existence of positive entire solutions. The classical work of Gidas and Spruck \cite{Gidas Spruck CPAM 1981} established that if $n>2$ and
\begin{align*}
1<p<p_S,
\end{align*}
then every nonnegative solution of (\ref{semilinear equation}) in $\mathbb{R}^n$ is identically zero.
On the other hand,
positive entire solutions exist in the critical and supercritical regimes $p\ge p_S$.
At the critical exponent,
the positive solutions are completely classified by Caffarelli, Gidas and Spruck \cite{Caffarelli Gidas Spruck CPAM 1989},
while in the supercritical regime positive radial entire solutions can be obtained from the associated radial ODE.

A different critical exponent arises when stability is imposed.
This is the Joseph--Lundgren exponent
\begin{align*}
p_{JL}=
\begin{cases}
+\infty, & 2\leq n\leq10,\\
\dfrac{n^2-8n+4+8\sqrt{n-1}}{(n-2)(n-10)}, & n\geq11.
\end{cases}
\end{align*}
It first arose in the work of Joseph and Lundgren \cite{Joseph Lundgren ARMA 1973}
and is closely related to the stability of the singular radial solution
\[
u(x)=C|x|^{-\frac{2}{p-1}},
\]
whose linearized stability can be characterized via Hardy's inequality.
While $p_S$ reflects the scaling and Sobolev criticality of the equation,
$p_{JL}$ reflects the stability properties of singular solutions.
For $2\leq n\leq10$, $p_{JL}=+\infty$, whereas $p_S<+\infty$ for $n>2$;
for $n\geq11$, $p_S<p_{JL}$.
Consequently, in the intermediate range
\[
p_S<p<p_{JL},
\]
positive entire solutions may exist, while the stability condition rules out nontrivial stable solutions.

The critical role of $p_{JL}$ in rigidity and Liouville-type problems was highlighted in the seminal work of Farina \cite{Farina JMPA 2007},
who established optimal nonexistence results for stable solutions. Specifically, it was proved that for $1 < p < p_{JL}$,
any stable $C^2$ solution to (\ref{semilinear equation}) in $\mathbb{R}^n$ must be identically zero. Beyond entire solutions,
the Joseph--Lundgren exponent also characterizes the classification of solutions that are stable outside a compact set or possess a finite Morse index,
thereby laying a foundational framework for the modern theory of stable solutions in nonlinear elliptic equations.

The study of Liouville-type theorems for stable solutions has attracted extensive attention and has been generalized to various quasilinear and anisotropic operators in recent years.
Following the fundamental work of Farina on the classical Lane--Emden equation, Damascelli, Farina, Sciunzi, and Valdinoci identified a new critical exponent for the $m$-Laplace equation, establishing rigidity results for solutions that are stable or stable outside a compact set \cite{Damascelli Farina Sciunzi Valdinoci AIHP 2009}.
For the $p$-Laplacian, Farina, Mercuri and Willem \cite{Farina Mercuri Willem CVPDE 2019} proved classification results for problems on bounded and unbounded domains and study qualitative properties of sign-changing solutions involving critical nonlinearities.
Recently, Farina, Sciunzi and Vuono \cite{Farina Sciunzi Vuono NA 2024} successfully extended these Liouville-type theorems to quasilinear anisotropic elliptic equations, covering the pure Finsler $p$-Laplacian.
In a closely related geometric context, Farina, Sciunzi, and Valdinoci \cite{Farina Sciunzi Valdinoci ASNSPS 2008} developed a geometric approach to tackle Bernstein and De Giorgi type problems. Inspired by these profound developments in quasilinear and geometric settings, our present work aims to adapt these purely analytical techniques, particularly the delicate gradient estimates (see Lemma \ref{Lemma main gradient est}), to the mean curvature equation, where the non-polynomial nature of the operator introduces novel technical subtleties.

A natural quasilinear counterpart of the Lane--Emden equation is the mean curvature equation
\begin{align}\label{Mean Curvature Equation}
-\operatorname{div}\left(
\frac{\nabla u}{\sqrt{1+|\nabla u|^2}}
\right)
=|u|^{p-1}u
\qquad\text{in}\ \Omega\subset\mathbb R^n.
\end{align}
The operator
$\operatorname{div}( \nabla u / \sqrt{1+|\nabla u|^2} )$
is the prescribed mean curvature operator associated with the graph of $u$ in $\mathbb R^{n+1}$, up to the choice of sign and normalization.
Therefore, (\ref{Mean Curvature Equation}) may be viewed as a geometric counterpart of the Lane--Emden equation.

The variational structure of (\ref{Mean Curvature Equation}) is closely related to the geometry of graphs.
More precisely, consider the area-type functional
\begin{align}\label{Mean Curvature Equation Functional}
\mathcal E[u]
=
\int_\Omega
\left(
\sqrt{1+|\nabla u|^2}
-\frac{1}{p+1}|u|^{p+1}
\right).
\end{align}
Computing its first variation yields
\begin{align*}
\frac{d}{dt} \mathcal{E}[u+t\phi] \bigg|_{t=0}
= \int_\Omega \left(\frac{\nabla u \cdot \nabla \phi}{\sqrt{1+|\nabla u|^2}} - |u|^{p-1}u \phi\right).
\end{align*}
Thus, equating the first variation to zero and integrating by parts yields precisely equation (\ref{Mean Curvature Equation}).
This variational structure naturally leads to the notion of stability.
Indeed, the second variation of $\mathcal{E}$ at a critical point $u$ is given by
\[
\frac{d^2}{dt^2} \mathcal{E}[u+t\phi] \bigg|_{t=0}
= \int_\Omega
\left(
\frac{|\nabla\phi|^2}{\sqrt{1+|\nabla u|^2}} - \frac{(\nabla u \cdot \nabla\phi)^2}{(1+|\nabla u|^2)^{3/2}} - p|u|^{p-1} \phi^2
\right).
\]
If this quadratic form is nonnegative for every test function $\phi \in C_c^\infty(\Omega)$, then $u$ is called a stable critical solution. This motivates the following precise definition:
\begin{definition}[Stable solution]
Let $p>1$.
A function $u \in W_{loc}^{1,1}(\Omega)\cap L_{loc}^p(\Omega)$ is called a weak solution of (\ref{Mean Curvature Equation}) if
\begin{align}\label{mean curvature weak equation}
\int_\Omega \frac{\nabla u \cdot \nabla \varphi}{\sqrt{1+|\nabla u|^2}}
= \int_\Omega |u|^{p-1}u \varphi
\qquad \forall \ \varphi \in C_c^\infty(\Omega).
\end{align}
A stable solution is a weak solution satisfying the following stability inequality
\begin{align}\label{stable}
\int_\Omega p|u|^{p-1}\xi^2
\leq
\int_\Omega
\left(
\frac{|\nabla \xi|^2}{\sqrt{1+|\nabla u|^2}} - \frac{(\nabla u \cdot \nabla \xi)^2}{(1+|\nabla u|^2)^{3/2}}
\right)
\quad \forall \ \xi \in C_c^\infty(\Omega).
\end{align}
\end{definition}

Despite the difficulties caused by the non-uniform ellipticity,
Liouville-type results for the mean curvature operator have been obtained in several settings.
In particular,
Farina and Navarro \cite{Farina Navarro DCDS 2020} established Liouville-type results for stable radial solutions in $\mathbb R^n$.
However, the corresponding Liouville problem for general,
not necessarily radially symmetric,
stable solutions is substantially more delicate.

This naturally leads to the question of whether the critical-exponent phenomena described above for the Lane--Emden equation persist for general stable solutions of the mean curvature equation.
In this paper, we investigate these questions for general, not necessarily radial, solutions of the mean curvature equation. We first establish a Liouville theorem for globally stable solutions in $\mathbb R^n$. We then consider solutions that are stable outside a compact set and obtain Liouville-type results at the Sobolev critical exponent. Finally, we extend the analysis to a class of unbounded domains satisfying an epigraph-type geometric condition.

Our first result is stated as follows:
\begin{theorem}\label{Theorem stable}
Let $\Omega=\mathbb R^n$, $n\geq 2$, and let $u\in C^2(\mathbb{R}^n)$ be a stable solution of (\ref{Mean Curvature Equation}).
If
\begin{align*}
1<p<p_{JL},
\end{align*}
then $u\equiv 0$.
\end{theorem}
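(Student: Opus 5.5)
Our plan is to adapt Farina's method for the Lane--Emden equation to the mean curvature operator, using the gradient estimate of Lemma \ref{Lemma main gradient est} (to be proved below) to control the degeneracy of the coefficient $(1+|\nabla u|^2)^{-1/2}$.

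\textbf{Step 1: stability inequality for powers of $u$.} Since $\sqrt{1+|\nabla u|^2}\ge 1$, the right-hand side of (\ref{stable}) is bounded by $\int|\nabla\xi|^2$. Hence $u$ also satisfies the Laplacian-type stability inequality
\[
p\int_{\mathbb R^n}|u|^{p-1}\xi^2\le\int_{\mathbb R^n}\frac{|\nabla\xi|^2}{\sqrt{1+|\nabla u|^2}}.
\]
We will use test functions $\xi=|u|^{(\gamma-1)/2}u\,\psi^m$ in this inequality, with $\gamma\in[1,2p+2\sqrt{p(p-1)}-1)$ and $\psi$ a cutoff equal to $1$ on $B_R$ and supported in $B_{2R}$. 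We will also use $\varphi=|u|^{\gamma-1}u\,\psi^{2m}$ in the weak equation (\ref{mean curvature weak equation}).

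For the Laplacian, combining these two identities produces
\[
\Bigl(p-\frac{(\gamma+1)^2}{4\gamma}\Bigr)\int|u|^{p+\gamma}\psi^{2m}
\]
on the left-hand side. Here the term $\int|u|^{\gamma-1}|\nabla u|^2\psi^{2m}$ is eliminated between the two relations. In our case the gradient terms carry the weights $(1+|\nabla u|^2)^{-1/2}$ in both the equation and the stability inequality, so they match and the same algebra goes through. The cross terms involving $\nabla\psi$ are bounded by Young's inequality, using again that the weight is at most $1$. The result is
\[
\int_{B_R}|u|^{p+\gamma}\le C\int_{B_{2R}}|u|^{\gamma+1}|\nabla\psi|^2\psi^{2m-2}+\dots
\]

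\textbf{Step 2: closing the estimate.} By Hölder's inequality we then get
\[
\int_{B_R}|u|^{p+\gamma}\le CR^{\,n-2\frac{p+\gamma}{p-1}}.
\]
For $p<p_{JL}$ one can choose $\gamma$ in the admissible range with $n<2(p+\gamma)/(p-1)$. This is exactly the computation that defines $p_{JL}$, and it gives $\int_{\mathbb R^n}|u|^{p+\gamma}=0$ after letting $R\to\infty$. Hence $u\equiv0$.

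\textbf{Main obstacle.} The delicate point is that the mean curvature operator contains the extra term $-(\nabla u\cdot\nabla\xi)^2/(1+|\nabla u|^2)^{3/2}$. Dropping it loses nothing, but it breaks the exact matching of weights in the cross term $\int |u|^{\gamma-1}u\nabla u\cdot\nabla(\psi^{2m})/\sqrt{1+|\nabla u|^2}$. Handling this cross term requires absorbing $|u|^{\gamma-1}|\nabla u|^2/\sqrt{1+|\nabla u|^2}$ into the good term. When $|\nabla u|$ is large, this weighted quantity is only comparable to $|u|^{\gamma-1}|\nabla u|$, not to its square.

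Here we would invoke the gradient estimate, Lemma \ref{Lemma main gradient est}, which bounds $|\nabla u|$ (or at least $\int$ of the weighted gradient) on $B_{2R}$ in terms of $R$ and $u$. With it, the weight is bounded below by a constant on the relevant region, up to errors that are lower order in $R$, and the Young absorption closes.

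If only a weaker integral bound is available, we would instead estimate the cross term directly as $C R^{-1}\int|u|^{\gamma}\psi^{2m-1}$. Since the flux $\nabla u/\sqrt{1+|\nabla u|^2}$ has modulus $<1$, this term is controlled by $\int|u|^{p+\gamma}$ through Hölder's inequality, with a power of $R$ that is still favorable in the same range of $\gamma$. We expect verifying this exponent bookkeeping for $n\ge11$ near $p_{JL}$ to be the main technical work.
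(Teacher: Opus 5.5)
Your outline follows the paper's approach. You use the same test functions $|u|^{\frac{\gamma-1}{2}}u\psi^m$ and $|u|^{\gamma-1}u\psi^{2m}$, and the same observation that once the $(\nabla u\cdot\nabla\xi)^2$ term is discarded both relations carry the weight $(1+|\nabla u|^2)^{-1/2}$. You also use the same condition $4\gamma p>(1+\gamma)^2$ and the same choice of $\gamma<a_+(p)$ with $2(p+\gamma)/(p-1)>n$, which the paper verifies by a case analysis in $n$.

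\textbf{The gap.} It sits exactly at the step you flag as the main obstacle, and neither of your remedies works.

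First, Lemma~\ref{Lemma main gradient est} is not a pointwise bound on $\nabla u$. It is the integral estimate $\int|u|^{p+a}\phi^{2m}+\int|u|^{a-1}\frac{|\nabla u|^2}{\sqrt{1+|\nabla u|^2}}\phi^{2m}\le C\int|\nabla\phi|^{\frac{2(p+a)}{p-1}}$. It gives no lower bound on the weight anywhere. Applied with $\phi_R$, it already is your Step 2, so citing it makes your Step 1 redundant rather than completing it. The paper's proof of the theorem is exactly that application plus the exponent algebra.

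Second, bounding the flux by $1$ loses the scaling. It turns the cross term into $CR^{-1}\int_{B_{2R}}|u|^{\gamma}$ and yields at best $\int_{B_R}|u|^{p+\gamma}\le CR^{\,n-\frac{p+\gamma}{p}}$. This decays only if $n<1+\gamma/p$. Since $\gamma<a_+(p)<4p-1$, it fails for every $p>1$ once $n\ge5$, not merely near $p_{JL}$.

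\textbf{The missing idea.} Your obstacle appears only because you bound the weight by $1$ in the cross term. Instead, split it:
\[
\frac{|u|^{\gamma}|\nabla u|\,\psi^{2m-1}|\nabla\psi|}{\sqrt{1+|\nabla u|^2}}
\le
\frac{|u|^{\frac{\gamma-1}{2}}|\nabla u|\,\psi^{m}}{(1+|\nabla u|^2)^{1/4}}\cdot
|u|^{\frac{\gamma+1}{2}}\psi^{m-1}|\nabla\psi| .
\]
Young's inequality then produces $\delta$ times exactly the weighted good term $G=\int|u|^{\gamma-1}\frac{|\nabla u|^2}{\sqrt{1+|\nabla u|^2}}\psi^{2m}$, plus terms you already control as in Step 2.

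For this absorption to work, $G$ must survive on the left-hand side. Your Step 1 eliminates it completely. Farina's way of disposing of the cross term, integrating $|u|^{\gamma-1}u\nabla u=\frac{1}{\gamma+1}\nabla|u|^{\gamma+1}$ by parts, is blocked by the weight.

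The paper adds $\varepsilon\bigl(\frac{1+\gamma}{2}\bigr)^2G$ to both sides before eliminating. This is allowed because $p-\frac{(1+\gamma)^2}{4\gamma}>0$ strictly, so a positive multiple of $G$ remains on the left. Alternatively, the weak-equation identity combined with the same weighted Cauchy--Schwarz gives $G\le C\bigl(\int|u|^{p+\gamma}\psi^{2m}+\int|u|^{\gamma+1}|\nabla\psi^m|^2\bigr)$. Either way the estimate closes with the correct exponent $n-\frac{2(p+\gamma)}{p-1}$, and no pointwise information on $\nabla u$ is needed.
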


Theorem \ref{Theorem stable} can be viewed as a mean-curvature analogue of the milestone result of Farina \cite[Theorem 1]{Farina JMPA 2007} for the Lane--Emden equation.
Indeed, the range $1<p<p_{JL}$ obtained here is the same as that in the semilinear case.
In particular, the same Joseph--Lundgren exponent appears in the nonexistence range obtained here, despite the lack of uniform ellipticity and the absence of the scaling invariance enjoyed by the Lane--Emden equation.

Before presenting our next two results,
we recall that $u$ is stable outside a compact set $\mathcal{K}$ if stability condition (\ref{stable}) holds for every $\xi\in C_c^\infty(\Omega\setminus\mathcal{K})$.
This concept is particularly relevant in the study of solutions with finite Morse index.
Recall that the Morse index of a solution $u$ is defined as the maximal dimension of a subspace of $C_c^\infty(\Omega)$ on which the quadratic form associated with the linearized operator is negative definite.
For the quadratic form associated with the linearized operator, finite Morse index is equivalent to stability outside a compact subset of $\Omega$.
Therefore, stability outside a compact set provides a natural framework for studying finite Morse index solutions without requiring global stability.
In the sequel, we establish a Liouville-type theorem for solutions that are stable outside a compact set at the Sobolev critical exponent.

\begin{theorem}\label{Theorem stable outside compact}
Let $\Omega=\mathbb R^n$, $n>2$, and let $u \in C^2(\mathbb{R}^n)$ be a solution of (\ref{Mean Curvature Equation}).
Suppose that $u$ is stable outside a compact set $\mathcal K\subset\mathbb R^n$.
If
\begin{align*}
p = p_S,
\end{align*}
then $u\equiv 0$.
\end{theorem}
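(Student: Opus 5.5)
The plan is the scheme of Farina for finite Morse index solutions of the Lane--Emden equation: first use stability outside $\mathcal K$ to obtain integrability, then apply a Pohozaev identity, which at $p=p_S$ forces $u\equiv0$. Concretely, the argument has three steps.

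\textbf{Step 1 (integrability).} Fix $R_0$ with $\mathcal K\subset B_{R_0}$. Take test functions $\xi=|u|^{\gamma-1}u\,\eta$ in the stability inequality (\ref{stable}) and $\varphi=|u|^{2\gamma-1}u\,\eta^2$ in (\ref{mean curvature weak equation}), with $\eta$ supported in $\{|x|>R_0\}$. Since $(\nabla u\cdot\nabla\xi)^2\ge0$, the term with $(1+|\nabla u|^2)^{-3/2}$ can be discarded and the quadratic form is bounded by $\int|\nabla\xi|^2/\sqrt{1+|\nabla u|^2}$.

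This yields a Farina-type inequality of the form
\[
\int |u|^{p+2\gamma-1}\eta^2
\le C\int |u|^{2\gamma}\frac{|\nabla\eta|^2}{\sqrt{1+|\nabla u|^2}}
\le C\int |u|^{2\gamma}|\nabla\eta|^2,
\]
valid for $\gamma$ in the Joseph--Lundgren range. The nonlinearity is weaker than the Laplacian, so I expect the gradient estimate of Lemma~\ref{Lemma main gradient est} to be what controls the mismatch between $|\nabla u|^2/\sqrt{1+|\nabla u|^2}$ and the weight appearing in the stability form.

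Iterating with $\eta$ equal to cutoffs on annuli, as in Farina's argument, I would obtain decay:
\[
\int_{B_{2R}\setminus B_R}|u|^{p+1}\le CR^{\,n-2\frac{p+1}{p-1}}.
\]
At $p=p_S$ the exponent $n-2(p+1)/(p-1)$ equals $0$. So the annular energies are bounded, and the usual dyadic refinement gives $u\in L^{p+1}(\mathbb R^n)$ together with $\int|\nabla u|^2/\sqrt{1+|\nabla u|^2}<\infty$. From this I would get the pointwise bound $|u(x)|\to0$ as $|x|\to\infty$ by local regularity: the gradient is bounded and the equation is uniformly elliptic where $u$ is small.

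\textbf{Step 2 (Pohozaev).} Multiply the equation by $x\cdot\nabla u\,\psi_R$ and by $u\,\psi_R$, where $\psi_R$ is a cutoff. Use the conservation-law identity for the divergence of $x_j F(\nabla u)-x\cdot\nabla u\,F_q(\nabla u)+\dots$ with $F(q)=\sqrt{1+|q|^2}$. In the limit this gives
\[
n\int\Big(\sqrt{1+|\nabla u|^2}-1\Big)-\int\frac{|\nabla u|^2}{\sqrt{1+|\nabla u|^2}}-\frac{n}{p+1}\int|u|^{p+1}=0,
\qquad
\int\frac{|\nabla u|^2}{\sqrt{1+|\nabla u|^2}}=\int|u|^{p+1}.
\]
The boundary terms vanish by the integrability from Step 1; this is the main obstacle, since I need $\int(\sqrt{1+|\nabla u|^2}-1)<\infty$ and decay of $R^{-1}\int_{B_{2R}\setminus B_R}|x||\nabla u|\,|u|^p$-type terms.

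\textbf{Step 3 (conclusion).} Write $t=|\nabla u|^2$ and combine the two identities with $n/(p+1)=(n-2)/2$. This gives
\[
\int\Big[n(\sqrt{1+t}-1)-\frac{n}{2}\,\frac{t}{\sqrt{1+t}}\Big]=0 .
\]
The integrand is $n\big(\sqrt{1+t}-1-\tfrac{t}{2\sqrt{1+t}}\big)=\tfrac{n(\sqrt{1+t}-1)^2}{2\sqrt{1+t}}\ge0$, with equality only if $t=0$. Hence $\nabla u\equiv0$. Then $u$ is constant and in $L^{p+1}$, so $u\equiv0$. Notably, the strict convexity deficit of the area integrand is exactly what makes the critical case rigid, in contrast to the Lane--Emden case where bubbles exist.
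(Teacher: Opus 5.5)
Your route is the paper's: integrability near infinity from stability with $a=1$ test functions, then the dilation Pohozaev identity combined with the equation tested against $u$, then positivity of $n(\sqrt{1+t}-1)^2/(2\sqrt{1+t})$ at $p=p_S$. Your Step~3 is exactly the paper's conclusion.

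Your Step~1 at $\gamma=1$ is in fact slightly cleaner than the paper's Lemma~\ref{Lemma main gradient est}. Take $\xi=u\eta$ and $\varphi=u\eta^2$; your exponent $2\gamma-1$ in $\varphi$ should be $2\gamma-2$, otherwise the powers of $|u|$ do not match. Then discard the whole nonnegative term $(\nabla u\cdot\nabla\xi)^2(1+|\nabla u|^2)^{-3/2}$. Every remaining term carries the weight $(1+|\nabla u|^2)^{-1/2}$, the cross terms cancel exactly, and you get
\[
(p-1)\int|u|^{p+1}\eta^2\le\int\frac{u^2|\nabla\eta|^2}{\sqrt{1+|\nabla u|^2}}.
\]
The paper instead enlarges only the diagonal weight and must absorb a surviving cross term by Young's inequality. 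For $\gamma\neq1$ your displayed inequality is not justified: a weighted cross term survives and cannot be integrated by parts as in Farina's argument. You do not need it here.

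Three points in the write-up need fixing.

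\textbf{Summing annuli.} Bounded annular energies do not sum to a finite total, so the ``dyadic refinement'' inference is wrong as stated. Instead, apply your inequality (with $\eta=\phi^m$ and Young) to a single cutoff that vanishes on $B_{R_0}$, equals $1$ on $B_R\setminus B_{R_0+1}$, and vanishes outside $B_{2R}$. The right-hand side is then $C+CR^{\,n-2(p+1)/(p-1)}=C$ uniformly in $R$; this is Lemma~\ref{Lemma integrable}. The bound on $\int|\nabla u|^2/\sqrt{1+|\nabla u|^2}$ then follows by testing the equation with $u\eta^2$ and absorbing the cross term.

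\textbf{Pointwise decay.} Drop the claim $|u(x)|\to0$. You have no global gradient bound, and the claim is never used: a constant solution vanishes directly from the equation.

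\textbf{Boundary terms in Step~2.} This is not really an obstacle. Write the nonlinear term as $x\cdot\nabla\big(|u|^{p+1}/(p+1)\big)$, i.e.\ use the conservation law as in Lemma~\ref{Lemma poho}; then no $|x||\nabla u||u|^p$ term appears. Since $|x||\nabla\psi_R|\le C$, every error term is bounded by $C\int_{B_{2R}\setminus B_R}\big(|\nabla u|^2/\sqrt{1+|\nabla u|^2}+|u|^{p+1}\big)$. The one exception is the cross term $\int u\psi_R\,\nabla u\cdot\nabla\psi_R/\sqrt{1+|\nabla u|^2}$ from the second identity. H\"older with exponents $2$, $p+1$, $n$ controls it, because $2(p+1)/(p-1)=n$ at $p=p_S$. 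Finally, $\int(\sqrt{1+|\nabla u|^2}-1)<\infty$ follows from $\sqrt{1+t}-1\le t/\sqrt{1+t}$.
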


In the semilinear setting,
the corresponding classification for solutions stable outside a compact set was established by Farina \cite[Theorem 2]{Farina JMPA 2007}.
He proved that if $u\in C^2(\mathbb R^n)$ is stable outside a compact set, then $u\equiv 0$ provided
\begin{align*}
\begin{cases}
p>1, & n=2, \\
1<p<\infty,\quad p\neq p_S, & 3\leq n\leq10, \\
1<p<p_{JL},\quad p\neq p_S, & n\geq11.
\end{cases}
\end{align*}
For the Lane--Emden equation, this range is known to be sharp.
In particular, the Sobolev critical exponent $p_S$ is an exceptional value in the classification of solutions stable outside a compact set for the Lane--Emden equation.
By contrast, our theorem establishes nonexistence precisely at this critical exponent $p_S$ for the mean curvature equation.

It is worth recalling some existence and nonexistence results for the mean curvature equation.
For $n>2$ and in the range
\begin{align*}
1<p\leq\frac{n}{n-2},
\end{align*}
Bidaut-V'eron and Pohozaev \cite{Veron Pohozaev JAM 2001} obtained nonexistence results for nonnegative supersolutions of a broad class of quasilinear elliptic inequalities that includes the mean curvature equation.

On the other hand, del Pino and Guerra \cite{del Pino Guerra JDE 2007}
studied the existence of positive radial ground states.
Following the terminology in \cite{Ni Serrin RCMP 1985},
a ground state refers to a positive solution of (\ref{Mean Curvature Equation}) that vanishes at infinity, i.e.,
\begin{align*}
u(x)\to 0\quad\text{as}\quad |x|\to\infty.
\end{align*}
del Pino and Guerra proved that,
when $p$ lies below but sufficiently close to the Sobolev critical exponent $p_S$,
the mean curvature equation admits a large,
finite number of radial ground states with fast decay.
Thus, their result provides the existence of nontrivial positive entire solutions in a portion of the intermediate range,
but does not cover the whole interval
\begin{align}\label{intermediate range}
\frac{n}{n-2}<p<\frac{n+2}{n-2}.
\end{align}
Consequently, the complete existence and nonexistence picture in this intermediate regime remains unclear. In particular, it is natural to ask whether nontrivial entire solutions exist throughout the whole range (\ref{intermediate range}).

Next, we consider domains satisfying
\begin{align}\label{domain eg epigraphs}
\exists X \in \mathbb{R}^n,\ |X|=1,\ \text{such that}\ \nu(x)\cdot X\geq 0,\ \nu(x)\cdot X\not\equiv 0\ \text{on}\ \partial\Omega,
\end{align}
where $\nu$ denotes the unit outward normal to $\partial\Omega$.
This geometric condition, originally introduced by Esteban and Lions \cite{Esteban Lions PRSE 1982} in their study of semilinear elliptic problems in unbounded domains, is satisfied by a large class of domains, including half-spaces and more generally domains that are the epigraphs of functions.
Our result is as follows:
\begin{theorem}\label{Theorem epigraph}
Let $\Omega \subset \mathbb{R}^n$ be a connected domain with smooth boundary satisfying (\ref{domain eg epigraphs}), $n>2$, and let $u \in C^2(\overline{\Omega})$ be a solution of the Dirichlet problem:
\begin{align*}
\left\{
\begin{array}{ll}
-\operatorname{div}\left(\dfrac{\nabla u}{\sqrt{1+|\nabla u|^2}}\right) = |u|^{p-1}u
&\qquad \text{in}\ \Omega, \\
u=0 &\qquad \text{on}\ \partial \Omega,
\end{array}
\right.
\end{align*}
Suppose that $u$ is stable outside a compact set $\mathcal{K}\subset \Omega$.
If
\begin{align*}
1<p\leq p_S,
\end{align*}
then $u\equiv 0$.
\end{theorem}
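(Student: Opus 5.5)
The plan follows the abstract: integrability at infinity, then a Pohozaev-type identity with the boundary term from (\ref{domain eg epigraphs}), then unique continuation.

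\textbf{Step 1: integrability at infinity.} Since $u$ is stable on $\Omega\setminus\mathcal K$, I would run the Farina-type test-function argument there. I would take $\xi=|u|^{\gamma-1}u\,\psi_R\,\eta$, where $\psi_R$ is a cutoff at scale $R$ and $\eta$ vanishes near $\mathcal K$. Since $u=0$ on $\partial\Omega$, such $\xi$ are admissible after a standard approximation; no boundary terms arise because they carry powers of $u$. Combining this with the weak equation and the gradient estimate (Lemma \ref{Lemma main gradient est}) gives $|u|^{p+1}\in L^1(\Omega)$ and $\nabla u\cdot\frac{\nabla u}{\sqrt{1+|\nabla u|^2}}\in L^1(\Omega)$, at least for $1<p\le p_S<p_{JL}$. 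The mean curvature flux is bounded, $|\nabla u|/\sqrt{1+|\nabla u|^2}<1$, which helps control the cutoff terms. I would also try to obtain $|\nabla u|\in L^2$ away from $\mathcal K$ via the elementary bound $|\nabla u|^2/\sqrt{1+|\nabla u|^2}\ge$ const$\cdot\min(|\nabla u|,|\nabla u|^2)$.

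\textbf{Step 2: Pohozaev identity.} Set $F(\xi)=\sqrt{1+|\xi|^2}-1$ and $a(\xi)=\nabla F(\xi)$. I would use the conservation law obtained from the translation field $X$:
\[
\operatorname{div}\Big( (F(\nabla u)-\tfrac{1}{p+1}|u|^{p+1})X - (X\cdot\nabla u)\,a(\nabla u)\Big)=0 .
\]
On $\partial\Omega$ we have $u=0$ and $\nabla u=\partial_\nu u\,\nu$. Integrating over $\Omega\cap B_R$ with a cutoff, the boundary flux becomes
\[
\int_{\partial\Omega}\big(F(\nabla u)-|\nabla u|^2/\sqrt{1+|\nabla u|^2}\big)(X\cdot\nu)\,d\sigma ,
\]
and $F(t)-t^2/\sqrt{1+t^2}=\sqrt{1+t^2}-1-t^2/\sqrt{1+t^2}<0$ for $t\neq0$. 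The terms on $\partial B_R$ vanish along a sequence $R\to\infty$ by Step 1. This gives $(X\cdot\nu)|\nabla u|=0$ on $\partial\Omega$.

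In parallel, I would use the dilation field $x$, whose conservation law has a bulk term $nF-\xi\cdot a-\frac{n-2}{2}\xi\cdot a$ that has a sign for $p\le p_S$, and whose boundary term $(x\cdot\nu)$ has no sign. That identity is used only as a fallback; the translation law already suffices if the limits are justified.

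\textbf{Step 3: unique continuation.} By (\ref{domain eg epigraphs}) there is a relatively open boundary piece $\Gamma$ with $X\cdot\nu>0$. On $\Gamma$ we then have $u=\partial_\nu u=0$. Near $\Gamma$ the equation is uniformly elliptic since $u\in C^2(\overline\Omega)$, so I would extend $u$ by zero across $\Gamma$ and apply unique continuation for quasilinear equations with Lipschitz coefficients. Connectedness of $\Omega$ then gives $u\equiv0$.

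\textbf{Main obstacle.} The hardest step is Step 1: the boundary-flux integrability needed to kill the terms on $\partial B_R$ in the conservation law. This requires $\int_{\Omega\cap(B_{2R}\setminus B_R)}(|\nabla u|^2/\sqrt{1+|\nabla u|^2}+|u|^{p+1})/R\to0$, which needs only the $L^1$ integrability of these quantities. I would first try to derive it from the weighted stability estimate with $\gamma=1$, using the gradient estimate to handle the anisotropic term. If that fails, I would use a Caccioppoli bound $\int_{B_R}|u|^{p+1}\le CR^{n-2\frac{p+1}{p-1}}+CR^{n-1}$ and then average over $R$.
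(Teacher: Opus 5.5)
Your proposal is correct and follows the paper's proof essentially step for step. The steps are: integrability of $|u|^{p+1}$ and $|\nabla u|^2/\sqrt{1+|\nabla u|^2}$ near infinity from the $a=1$ gradient estimate (Lemmas \ref{Lemma main gradient est Dirichlet zero boundary} and \ref{Lemma integrable}); the translation-field identity of Lemma \ref{Lemma poho constant}, using the sign of $\sqrt{1+t^2}-1-t^2/\sqrt{1+t^2}$ on $\partial\Omega$; and zero extension across $\Gamma$ followed by Garofalo--Lin unique continuation.

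State one thing precisely: $p\le p_S$ enters only through $\frac{2(p+1)}{p-1}\ge n$, which keeps $\int|\nabla\phi_R|^{2(p+1)/(p-1)}$ bounded in $R$; no comparison with $p_{JL}$ is involved. Your dilation-field and Caccioppoli fallbacks are therefore unnecessary. The dilation-field one would fail anyway: for $p<p_S$ its bulk term $n(\sqrt{1+t^2}-1)-\bigl(1+\frac{n}{p+1}\bigr)\frac{t^2}{\sqrt{1+t^2}}$ changes sign.
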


For the semilinear Dirichlet problem on domains satisfying (\ref{domain eg epigraphs}), an analogous rigidity property was obtained in \cite[Theorem 9(a)]{Farina JMPA 2007}.
In that setting,
Farina proved that if $u \in C^2(\overline{\Omega})$ is stable outside a compact subset of $\Omega$, then $u\equiv 0$ provided $1<p\leq p_S$.
Thus, the range $1<p\leq p_S$ in Theorem \ref{Theorem epigraph} coincides with the corresponding range in Farina's semilinear result.

For the Lane--Emden equation, the corresponding exponent ranges are known to be optimal, as the failure of the Liouville property beyond the critical range can be demonstrated by the existence of suitable nontrivial solutions.
For the mean curvature equation,
however,
it remains open whether the nonexistence ranges in our Liouville theorems are optimal.
This difficulty is closely related to the quasilinear nature,
nonlinearity,
and lack of scaling invariance of the mean curvature operator,
which substantially complicate both the construction of nontrivial solutions and the derivation of sharp a priori estimates.

The remainder of the paper is organized as follows. In Section 2, we establish several auxiliary results that will be used throughout the proofs, including a gradient estimate for stable solutions, an integrability lemma, and conservation laws and Pohozaev-type identities. Section 3 is devoted to the proofs of the three main theorems.

We denote by \(C\) a generic positive constant, which may change from line to line;
when dependence on parameters is relevant, we write \(C_{n,p,a}\), etc.
For \(R>0\), \(B_R\) stands for the open ball of radius \(R\) centered at the origin in \(\mathbb R^n\),
and \(B_R(x_0)\) for the ball centered at \(x_0\).
We use $I$ for the identity matrix and $\delta_{ij}$ for the Kronecker delta.
All integrals are with respect to the Lebesgue measure in $\mathbb{R}^n$ and we usually omit $dx$.

\section{Gradient Estimate and Pohozaev Identity}

\subsection{Gradient Estimate}

We introduce a gradient estimate for the mean curvature equation,
which is crucial for the proof of Theorem \ref{Theorem stable}.

\begin{lemma}\label{Lemma main gradient est}
Let $p>1$ and
\begin{align*}
a_+(p) := -1+2p+2\sqrt{p(p-1)}.
\end{align*}
Suppose $u\in C^2(\Omega)$ is a stable solution of (\ref{Mean Curvature Equation}).
Then for every
\begin{align*}
a\in\left[1, a_+(p)\right),
\end{align*}
there exists an integer $m = m(p,a)>0$ and a constant $C=C(n,p,a)>0$ such that
\begin{align*}
\int_\Omega |u|^{p+a} \phi^{2m}
+ \int_\Omega |u|^{a-1} \frac{|\nabla u|^2}{\sqrt{1+|\nabla u|^2}}\phi^{2m}
\leq C \int_\Omega |\nabla\phi|^{\frac{2(p+a)}{p-1}}
\end{align*}
for all $\phi\in C_c^\infty(\Omega)$ with $0\leq \phi\leq 1$.
\end{lemma}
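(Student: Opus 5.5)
The plan is to adapt Farina's test-function argument to the mean curvature operator. We test the weak equation (\ref{mean curvature weak equation}) with $\varphi=|u|^{a-1}u\,\psi^2$ and the stability inequality (\ref{stable}) with $\xi=|u|^{\frac{a-1}{2}}u\,\psi$, where $\psi=\phi^m$. We then compare the two resulting identities.

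Since $a\ge1$, the maps $t\mapsto |t|^{a-1}t$ and $t\mapsto |t|^{\frac{a-1}{2}}t$ are $C^1$. So $\varphi,\xi\in C^1_c(\Omega)$, and a standard density argument makes them admissible in (\ref{mean curvature weak equation}) and (\ref{stable}); this is where the restriction $a\ge 1$ enters.

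Write $W=\sqrt{1+|\nabla u|^2}$. The equation gives
\[
a\int |u|^{a-1}\frac{|\nabla u|^2}{W}\psi^2+2\int |u|^{a-1}u\,\psi\frac{\nabla u\cdot\nabla\psi}{W}=\int |u|^{p+a}\psi^2 .
\]
For the stability side, expand $\nabla\xi=\frac{a+1}{2}|u|^{\frac{a-1}{2}}\psi\nabla u+|u|^{\frac{a-1}{2}}u\nabla\psi$. The key algebraic observation is that the purely gradient part of the quadratic form collapses:
\[
\frac{|\nabla u|^2}{W}-\frac{|\nabla u|^4}{W^3}=\frac{|\nabla u|^2}{W^3}\le \frac{|\nabla u|^2}{W}.
\]
Hence
\[
p\int|u|^{p+a}\psi^2\le \frac{(a+1)^2}{4}\int|u|^{a-1}\frac{|\nabla u|^2}{W^3}\psi^2+\mathcal R,
\]
where $\mathcal R$ collects the mixed terms (of the form $|u|^{a-1}u\psi\,\nabla u\cdot\nabla\psi\,W^{-3}$ up to constants) and the terms quadratic in $\nabla\psi$. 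The quadratic terms are bounded by $\int|u|^{a+1}|\nabla\psi|^2$, using $W\ge1$.

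Now keep a small piece $\delta\int|u|^{a-1}|\nabla u|^2W^{-3}\psi^2$ aside, bound the rest via $W^{-3}\le W^{-1}$, and substitute the identity from the equation. This gives
\[
\Big(p-\frac{(a+1)^2}{4a}\Big)\int|u|^{p+a}\psi^2+\delta\!\int\!\dots\le \text{(mixed terms)}+C\int|u|^{a+1}|\nabla\psi|^2 .
\]
The coefficient $p-\frac{(a+1)^2}{4a}$ is positive exactly when $a^2-(4p-2)a+1<0$, i.e. $a<a_+(p)$, which explains the threshold in the statement. Returning to the equation identity then recovers the full term $\int|u|^{a-1}|\nabla u|^2W^{-1}\psi^2$, not just the $W^{-3}$ version. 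The mixed terms are handled by Young's inequality:
\[
|u|^{a}\psi|\nabla u||\nabla\psi|W^{-1}\le \varepsilon|u|^{a-1}|\nabla u|^2W^{-1}\psi^2+C_\varepsilon|u|^{a+1}|\nabla\psi|^2 ,
\]
and similarly with $W^{-3}$. With $\varepsilon$ small these are absorbed into the left-hand side. Altogether we obtain
\[
\int|u|^{p+a}\phi^{2m}+\int|u|^{a-1}\frac{|\nabla u|^2}{W}\phi^{2m}\le C\,m^2\int|u|^{a+1}\phi^{2m-2}|\nabla\phi|^2 .
\]

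The last step uses H\"older's inequality with exponents $\frac{p+a}{a+1}$ and $\frac{p+a}{p-1}$:
\[
\int|u|^{a+1}\phi^{2m-2}|\nabla\phi|^2\le\Big(\int|u|^{p+a}\phi^{(2m-2)\frac{p+a}{a+1}}\Big)^{\frac{a+1}{p+a}}\Big(\int|\nabla\phi|^{\frac{2(p+a)}{p-1}}\Big)^{\frac{p-1}{p+a}}.
\]
Choose $m\ge\frac{p+a}{p-1}$, so that $(2m-2)\frac{p+a}{a+1}\ge 2m$. Since $0\le\phi\le1$, the first factor is then at most $(\int|u|^{p+a}\phi^{2m})^{\frac{a+1}{p+a}}$. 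Young's inequality absorbs this into the left side, leaving $C\int|\nabla\phi|^{2(p+a)/(p-1)}$ as claimed.

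The main obstacle is the bookkeeping in the middle step. The stability form produces the degenerate weight $W^{-3}$, while the equation produces $W^{-1}$, and only the inequality $W^{-3}\le W^{-1}$ runs in the favourable direction. One must therefore make sure every absorption goes the right way. In particular, the mixed term coming from stability carries $W^{-3}$ and must be absorbed against the gradient term with the weaker weight. I would first try splitting it with Young's inequality against $\delta\int|u|^{a-1}|\nabla u|^2W^{-3}\psi^2$ (the reserved piece) and then use $W\ge1$ on the remainder.
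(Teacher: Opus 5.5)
Your proposal is correct and is essentially the paper's proof: the same test functions, the same collapse of the gradient part of the stability form to the weight $W^{-3}$, the bound $W^{-3}\le W^{-1}$ combined with the equation to get the threshold $4ap>(a+1)^2$ (i.e.\ $a<a_+(p)$), Young absorption of the mixed terms, and the choice $m\ge\frac{p+a}{p-1}$. The differences are only bookkeeping: the paper adds $\varepsilon\left(\frac{1+a}{2}\right)^2\int|u|^{a-1}|\nabla u|^2W^{-1}\psi^2$ to both sides instead of reserving a $W^{-3}$ piece, and it closes with pointwise Young inequalities rather than your H\"older-then-Young step; your reserved piece is in fact unnecessary, since every mixed term can be bounded with weight $W^{-1}$ and absorbed once the equation puts $\int|u|^{a-1}|\nabla u|^2W^{-1}\psi^2$ on the left, and if you do reserve it your coefficient drops to $p-\frac{(a+1)^2+4\delta}{4a}$.
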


\begin{remark}
Our gradient estimate differs structurally from the corresponding semilinear inequality \cite[Proposition 1]{Farina JMPA 2007}.
While Farina's estimate contains the second-order quantity $|\Delta\psi|$ on the right-hand side,
ours involves only the first-order term $|\nabla\phi|$.
In Euclidean space this difference is immaterial for the usual radial cut-offs.
On a Riemannian manifold, however, one naturally takes $\phi=\varphi(d(\,\cdot\,,o)/R)$ where $d$ is the distance function.
Controlling $\Delta\phi$ requires a Laplacian comparison estimate for the distance function, which in turn requires suitable geometric assumptions such as a lower Ricci-curvature bound.
In contrast, our estimation only requires an estimate of $|\nabla\phi|$, and when extended to geometric setting, no additional curvature assumptions are needed.
\end{remark}

\begin{proof}[Proof of Lemma \ref{Lemma main gradient est}]
Let $\eta \in C_c^{\infty}(\Omega)$ be a nonnegative cut-off function.
Since $a\ge1$, the functions $t\mapsto |t|^{a-1}t$ belongs to $C^1(\mathbb R)$.
Hence
\begin{align*}
\varphi = |u|^{a-1} u \eta^2 \in C_c^1(\Omega).
\end{align*}
By a standard density argument,
the weak formulation (\ref{mean curvature weak equation}) remains valid for $C_c^1$ test functions.
Therefore substituting $\varphi$ and its gradient
\begin{align*}
\nabla \varphi = a |u|^{a-1} \eta^2 \nabla u + 2|u|^{a-1}u \eta\nabla \eta
\end{align*}
into (\ref{mean curvature weak equation}) gives
\begin{align}\label{weak eq est 1}
a\int_\Omega \frac{|u|^{a-1}|\nabla u|^2\eta^2}{\sqrt{1+|\nabla u|^2}}
+ 2\int_\Omega |u|^{a-1}u \frac{\nabla u \cdot \nabla\eta}{\sqrt{1+|\nabla u|^2}}\eta
= \int_\Omega |u|^{p+a} \eta^2.
\end{align}

By the same density argument, the function
\begin{align*}
\xi = |u|^{\frac{a-1}{2}} u \eta
\end{align*}
is admissible in the stability inequality (\ref{stable}).
We compute
\begin{align*}
\nabla \xi
= \frac{1+a}{2} |u|^{\frac{a-1}{2}} \eta \nabla u + |u|^{\frac{a-1}{2}} u \nabla\eta,
\end{align*}
\begin{align*}
|\nabla \xi|^2
= \left(\frac{1+a}{2}\right)^2 |u|^{a-1} |\nabla u|^2 \eta^2
+ |u|^{1+a} |\nabla\eta|^2
+ (a+1)|u|^{a-1}u\eta (\nabla u\cdot\nabla\eta),
\end{align*}
and
\begin{align*}
\left(\nabla u \cdot \nabla \xi\right)^2
= \left(\frac{1+a}{2}\right)^2 |u|^{a-1} |\nabla u|^4 \eta^2
+ |u|^{1+a} \left(\nabla u \cdot \nabla\eta\right)^2
+ (a+1) |u|^{a-1}u |\nabla u|^2 \eta (\nabla u\cdot \nabla\eta)
\end{align*}
Inserting these into (\ref{stable}) yields
\begin{align}\label{7e71ce28a}
p \int_\Omega|u|^{p+a} \eta^2
&\leq \left(\frac{1+a}{2}\right)^2 \int_\Omega \frac{|u|^{a-1} |\nabla u|^2 \eta^2}{(1+|\nabla u|^2)^{3/2}}
+ \int_\Omega\frac{|u|^{1+a}|\nabla\eta|^2}{\sqrt{1+|\nabla u|^2}} \notag \\
&\qquad - \int_\Omega\frac{|u|^{1+a}\left(\nabla u \cdot \nabla\eta\right)^2}{(1+|\nabla u|^2)^{3/2}}
+(a+1)\int_\Omega\frac{|u|^{a-1}u \left(\nabla u \cdot \nabla\eta\right)\eta}{(1+|\nabla u|^2)^{3/2}}
\end{align}
Using
\[
\frac{1}{(1+|\nabla u|^2)^{3/2}}
\le
\frac{1}{\sqrt{1+|\nabla u|^2}},
\]
we enlarge the first term on the right-hand side of
(\ref{7e71ce28a}) by
\[
\left(\frac{1+a}{2}\right)^2
\int_\Omega
\frac{|u|^{a-1}|\nabla u|^2\eta^2}
{\sqrt{1+|\nabla u|^2}}.
\]
Next, we add the term
\begin{align*}
\varepsilon \left(\frac{a+1}{2}\right)^2
\int_\Omega \frac{|u|^{a-1}|\nabla u|^2\eta^2}{\sqrt{1+|\nabla u|^2}}
\end{align*}
to both sides, where $\varepsilon >0$ is to be chosen later.
We obtain
\begin{align}\label{stable est 1}
&p \int_\Omega|u|^{p+a} \eta^2
+ \varepsilon \left(\frac{1+a}{2}\right)^2 \int_\Omega\frac{|u|^{a-1}|\nabla u|^2\eta^2}{\sqrt{1+|\nabla u|^2}} \notag\\
&\leq (1+\varepsilon)\left(\frac{1+a}{2}\right)^2 \int_\Omega\frac{|u|^{a-1} |\nabla u|^2 \eta^2}{\sqrt{1+|\nabla u|^2}}
+ \int_\Omega\frac{|u|^{1+a}|\nabla\eta|^2}{\sqrt{1+|\nabla u|^2}} \notag\\
&\qquad - \int_\Omega\frac{|u|^{1+a}\left(\nabla u \cdot \nabla\eta\right)^2}{(1+|\nabla u|^2)^{3/2}}
+(a+1)\int_\Omega |u|^{a-1}u \eta\frac{\left(\nabla u \cdot \nabla\eta\right)}{(1+|\nabla u|^2)^{3/2}}
\end{align}
Using (\ref{weak eq est 1}) to eliminate the first term on the right-hand side of (\ref{stable est 1}), we obtain
\begin{align}\label{071a9e4d4}
&\left(p - \frac{1+\varepsilon}{a} \left(\frac{1+a}{2}\right)^2\right) \int_\Omega |u|^{p+a}\eta^2
+ \varepsilon \left(\frac{1+a}{2}\right)^2 \int_\Omega |u|^{a-1} \frac{|\nabla u|^2}{\sqrt{1+|\nabla u|^2}} \eta^2 \notag\\
&\leq \int_\Omega \frac{|u|^{1+a}|\nabla\eta|^2}{\sqrt{1+|\nabla u|^2}}
- \int_\Omega \frac{|u|^{1+a}\left(\nabla u \cdot \nabla\eta\right)^2}{(1+|\nabla u|^2)^{3/2}} \notag\\
&\qquad +\int_\Omega \left(\frac{1+a}{(1+|\nabla u|^2)^{3/2}}
- \frac{2(1+\varepsilon)\left(\frac{1+a}{2}\right)^2}{a\sqrt{1+|\nabla u|^2}}\right)|u|^{a-1} u (\nabla u \cdot \nabla\eta)\eta
\end{align}
Under the condition $a>0$, we have
\begin{align*}
p - \frac{1}{a} \left(\frac{1+a}{2}\right)^2>0
&\Longleftrightarrow 4ap >(a+1)^2 \\
&\Longleftrightarrow a^2 + (2-4p)a +1 < 0 \\
&\Longleftrightarrow a \in (a_-(p), a_+(p)),
\end{align*}
where
\begin{align*}
a_-(p) &:= -1+2p-2\sqrt{p(p-1)}.
\end{align*}
Noting $a_-(p) = \left(\sqrt{p} - \sqrt{p-1}\right)^2 <1$,
and recalling that we are working in the sub-interval $a \in [1,a_+(p)) \subset (a_-(p), a_+(p))$,
which means that
\begin{align*}
p - \frac{1}{a} \left(\frac{1+a}{2}\right)^2>0.
\end{align*}
Therefore the coefficient of the first integral on the left of (\ref{071a9e4d4}) is positive for every sufficiently small $\varepsilon>0$.
Fix such a small enough $\varepsilon$.
Consequently,
after absorbing all constants into a generic constant $C=C_{n,p,a}$,
from (\ref{071a9e4d4}) we obtain
\begin{align}\label{est 1}
\int_\Omega |u|^{p+a}\eta^2
+ \int_\Omega |u|^{a-1} \frac{|\nabla u|^2}{\sqrt{1+|\nabla u|^2}} \eta^2
\leq C_{n,p,a} \int_\Omega \left(\frac{|u|^{1+a}|\nabla\eta|^2}{\sqrt{1+|\nabla u|^2}}
+ \frac{|u|^{a} |\nabla u| |\nabla \eta|\eta}{\sqrt{1+|\nabla u|^2}}\right).
\end{align}
Set $\eta = \phi^m$, where \(m\) is a positive integer to be chosen later.
Then (\ref{est 1}) becomes
\begin{align}\label{est 2}
&\int_\Omega |u|^{p+a}\phi^{2m}
+ \int_\Omega |u|^{a-1} \frac{|\nabla u|^2}{\sqrt{1+|\nabla u|^2}} \phi^{2m} \notag\\
&\leq C_{n,p,a,m} \int_\Omega \left(\frac{|u|^{1+a}\phi^{2(m-1)}|\nabla\phi|^2}{\sqrt{1+|\nabla u|^2}}
+ \frac{|u|^{a} |\nabla u| \phi^{2m-1}|\nabla\phi|}{\sqrt{1+|\nabla u|^2}}\right).
\end{align}
Apply the following Young's inequality
\begin{align*}
XY \leq \delta X^{\frac{p+a}{1+a}} + C_\delta Y^{\frac{p+a}{p-1}}
\end{align*}
where $\delta$ is a small positive constant chosen later.
Taking
\begin{align*}
X = |u|^{1+a}\phi^{2(m-1)}
\quad \text{and} \quad Y = |\nabla\phi|^2,
\end{align*}
we get
\begin{align*}
\int_\Omega \frac{|u|^{1+a} \phi^{2(m-1)} |\nabla\phi|^2}{\sqrt{1+|\nabla u|^2}}
&\leq \int_\Omega |u|^{1+a}\phi^{2(m-1)}|\nabla\phi|^2 \notag\\
&\leq \delta \int_\Omega |u|^{p+a} \phi^{\frac{2(m-1)(p+a)}{1+a}}
+ C_\delta \int_\Omega |\nabla\phi|^{\frac{2(p+a)}{p-1}}
\end{align*}
Choose integer $m$ so large that $m \geq (p+a)/(p-1)$, which ensures that
\begin{align*}
\frac{2(m-1)(p+a)}{1+a} \geq 2m.
\end{align*}
Since $0 \leq \phi \leq 1$, we deduce
\begin{align}\label{after Youngs eq 1}
\int_\Omega \frac{|u|^{1+a} \phi^{2(m-1)} |\nabla\phi|^2}{\sqrt{1+|\nabla u|^2}}
&\leq \delta \int_\Omega |u|^{p+a} \phi^{2m} + C_\delta \int_\Omega |\nabla\phi|^{\frac{2(p+a)}{p-1}}
\end{align}
Also using the three-factor Young's inequality
\begin{align*}
XYZ \leq \delta X^2 + \delta Y^\frac{2(p+a)}{1+a} + C_\delta Z^\frac{2(p+a)}{p-1},
\end{align*}
and taking
\begin{align*}
X=\frac{|u|^{\frac{a-1}{2}}|\nabla u|\phi^{m}}{(1+|\nabla u|^2)^{1/4}},
\quad Y=|u|^{\frac{a+1}{2}} \phi^{m-1}
\quad\text{and}\quad Z=|\nabla\phi|,
\end{align*}
then
\begin{align}\label{after Youngs eq 2}
&\int_\Omega \frac{|u|^{a} |\nabla u|\phi^{2m-1} |\nabla\phi|}{\sqrt{1+|\nabla u|^2}} \notag\\
&\leq \int_\Omega \frac{|u|^{a} |\nabla u|\phi^{2m-1} |\nabla\phi|}{(1+|\nabla u|^2)^{1/4}} \notag\\
&\leq \delta \int_\Omega |u|^{a-1} \frac{|\nabla u|^2}{\sqrt{1+|\nabla u|^2}} \phi^{2m}
+\delta \int_\Omega |u|^{p+a} \phi^{\frac{2(m-1)(p+a)}{1+a}}
+C_\delta \int_\Omega |\nabla\phi|^{\frac{2(p+a)}{p-1}} \notag\\
&\leq \delta \int_\Omega |u|^{a-1} \frac{|\nabla u|^2}{\sqrt{1+|\nabla u|^2}} \phi^{2m}
+ \delta\int_\Omega |u|^{p+a} \phi^{2m}
+C_\delta \int_\Omega |\nabla\phi|^{\frac{2(p+a)}{p-1}}
\end{align}
Let $C_0=C_{n,p,a,m}$ denote the constant in (\ref{est 2}).
Choose $\delta>0$ sufficiently small so that
\[
3C_0\delta<1.
\]
Inserting (\ref{after Youngs eq 1}) and (\ref{after Youngs eq 2}) into (\ref{est 2}) yields
\begin{align*}
\int_\Omega |u|^{p+a} \phi^{2m}
+ \int_\Omega |u|^{a-1} \frac{|\nabla u|^2}{\sqrt{1+|\nabla u|^2}} \phi^{2m}
\leq C_{n,p,a} \int_\Omega |\nabla\phi|^{\frac{2(p+a)}{p-1}},
\end{align*}
which is the desired estimate.
This proves Lemma \ref{Lemma main gradient est}.
\end{proof}

The following variant of the preceding gradient estimate is useful when homogeneous Dirichlet boundary conditions are imposed. In this case, the vanishing of $u$ on $\partial\Omega$ allows us to use cut-off functions defined on $\mathbb R^n\setminus \mathcal{K}$, which will be convenient in the sequel.

\begin{lemma}\label{Lemma main gradient est Dirichlet zero boundary}
Let $p>1$ and let $\Omega \subset \mathbb{R}^n$ be a domain with smooth boundary.
Suppose $u\in C^2(\overline{\Omega})$ is a solution of
\begin{align*}
\left\{
\begin{array}{ll}
-\operatorname{div}\left(\dfrac{\nabla u}{\sqrt{1+|\nabla u|^2}}\right) = |u|^{p-1}u
&\qquad \text{in}\ \Omega, \\
u=0 &\qquad \text{on}\ \partial \Omega
\end{array}
\right.
\end{align*}
that is stable outside a compact set $\mathcal{K}\subset \Omega$.
Then, for every
\begin{align*}
a\in[1,a_+(p)),
\end{align*}
there exist an integer $m=m(p,a)>0$ and a constant
$C=C_{n,p,a}>0$ such that
\begin{align*}
\int_\Omega |u|^{p+a}\phi^{2m}
+
\int_\Omega
|u|^{a-1}
\frac{|\nabla u|^2}{\sqrt{1+|\nabla u|^2}}
\phi^{2m}
\le
C\int_\Omega
|\nabla\phi|^{\frac{2(p+a)}{p-1}}
\end{align*}
for every $\phi\in C_c^\infty(\mathbb R^n\setminus \mathcal{K})$ satisfying
$0\le\phi\le1$.
Here $a_+(p)$ is defined as in Lemma \ref{Lemma main gradient est}.
\end{lemma}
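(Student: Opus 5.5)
The plan is to repeat the proof of Lemma \ref{Lemma main gradient est} almost verbatim. The only real issue is checking that the two test functions stay admissible when $\phi$ is supported in $\mathbb R^n\setminus\mathcal K$ rather than in $\Omega$. Fix $\phi\in C_c^\infty(\mathbb R^n\setminus\mathcal K)$ with $0\le\phi\le1$, and put $\eta=\phi^m$ restricted to $\overline\Omega$.

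First I show that
\[
\varphi=|u|^{a-1}u\,\eta^2
\qquad\text{and}\qquad
\xi=|u|^{\frac{a-1}{2}}u\,\eta
\]
are legitimate in the weak formulation (\ref{mean curvature weak equation}) and in the stability inequality (\ref{stable}), respectively. Since $a\ge1$, both $t\mapsto|t|^{a-1}t$ and $t\mapsto|t|^{\frac{a-1}{2}}t$ are $C^1$. Hence $\varphi,\xi\in C^1(\overline\Omega)$, and both have compact support in $\overline\Omega\setminus\mathcal K$. They vanish on $\partial\Omega$ because $u=0$ there. They need not vanish near $\partial\Omega$, however, so they are not in $C_c^1(\Omega)$. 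This is the main obstacle.

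To handle it, I approximate. Take $\theta_k\in C_c^\infty(\Omega)$ with $\theta_k\to1$ pointwise, chosen as cut-offs in the distance to $\partial\Omega$ on scale $1/k$, with $|\nabla\theta_k|\le Ck$ on a strip of width $1/k$. Apply the identities to $\varphi\theta_k$ and $\xi\theta_k$; these are $C^1_c(\Omega\setminus\mathcal K)$, so they are admissible by the density argument already used. The extra terms involve $\varphi\nabla\theta_k$ and $\xi\nabla\theta_k$. Since $u\in C^2(\overline\Omega)$ vanishes on the smooth boundary, $|u|\le C\,\mathrm{dist}(x,\partial\Omega)\le C/k$ on the strip on the compact support. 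Hence $|\varphi\nabla\theta_k|$ and $|\xi\nabla\theta_k|$ stay bounded, while the strip has measure $O(1/k)$ within the support. So the error integrals tend to $0$ as $k\to\infty$, and dominated convergence gives (\ref{mean curvature weak equation}) for $\varphi$ and (\ref{stable}) for $\xi$.

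Equivalently, one may integrate by parts directly, using $C^2$ regularity up to the boundary: the boundary term $\int_{\partial\Omega}\varphi\,\frac{\partial_\nu u}{\sqrt{1+|\nabla u|^2}}$ vanishes because $\varphi=0$ on $\partial\Omega$. Stability outside $\mathcal K$ is used only through the fact that $\operatorname{supp}\xi\theta_k\subset\Omega\setminus\mathcal K$.

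Once admissibility is settled, the computation of Lemma \ref{Lemma main gradient est} goes through unchanged, with all integrals over $\Omega$:
\begin{itemize}
\item identities (\ref{weak eq est 1}) and (\ref{7e71ce28a}) hold;
\item the elimination step gives (\ref{071a9e4d4}), with a positive coefficient for $a\in[1,a_+(p))$ and small $\varepsilon$;
\item the same Young inequalities, with $m\ge (p+a)/(p-1)$ and $3C_0\delta<1$, give the claimed bound.
\end{itemize}
For the absorption, the integrals being absorbed are finite because $\eta$ has compact support and $u\in C^2(\overline\Omega)$. The right-hand side $\int_\Omega|\nabla\phi|^{2(p+a)/(p-1)}$ can trivially be bounded by the integral over $\mathbb R^n$ if needed.
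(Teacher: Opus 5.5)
Your proposal is correct and follows the paper's route: you rerun the computation of Lemma \ref{Lemma main gradient est} after using the Dirichlet condition to check that $|u|^{a-1}u\phi^{2m}$ and $|u|^{\frac{a-1}{2}}u\phi^m$ are admissible in the weak formulation and in the stability inequality. The only difference is in how admissibility is justified: the paper observes that these functions lie in $H_0^1(\Omega\setminus\mathcal K)$ and invokes density, whereas you carry out the approximation explicitly with boundary-layer cut-offs and the bound $|u|\le C\,\mathrm{dist}(x,\partial\Omega)$, which correctly makes the error terms vanish since $a\ge1$.
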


\begin{proof}
The proof is the same as that of Lemma \ref{Lemma main gradient est}, except that the cut-off function $\phi$ is not required to vanish near $\partial\Omega$. The homogeneous Dirichlet condition allows the test functions below to be approximated by functions in $C_c^\infty(\Omega)$.
Indeed, since $u\in C^2(\overline{\Omega})$,
$\phi\in C_c^\infty(\mathbb R^n \setminus \mathcal{K})$,
and $a\ge1$,
both functions
\begin{align*}
|u|^{a-1}u\phi^{2m}
\quad\text{and}\quad
|u|^{\frac{a-1}{2}}u\phi^m
\end{align*}
belong to $H^1(\Omega)$. Since $u=0$ on $\partial\Omega$ and $\phi$ is compactly supported in $\mathbb R^n\setminus\mathcal{K}$, these functions vanish both on $\partial\Omega$ (in the sense of trace) and in a neighborhood of $\mathcal{K}$. Hence they actually belong to $H_0^1(\Omega\setminus\mathcal{K})$.
Therefore, by approximation with functions in $C_c^\infty(\Omega\setminus\mathcal K)$,
they may be used as test functions in both the weak formulation and the stability inequality.
With these test functions, the computations in the proof of Lemma \ref{Lemma main gradient est} are unchanged.
\end{proof}

\begin{lemma}\label{Lemma integrable}
Suppose that either
\begin{itemize}
\item[(i)] $\Omega=\mathbb R^n$, and $u\in C^2(\mathbb R^n)$ is a solution that is stable outside a compact set $\mathcal K\subset\mathbb R^n$; or
\item[(ii)] $\Omega\subset\mathbb R^n$ is a domain with smooth boundary, $u\in C^2(\overline{\Omega})$ is a solution of
\begin{align*}
\left\{
\begin{array}{ll}
-\operatorname{div}\left(\dfrac{\nabla u}{\sqrt{1+|\nabla u|^2}}\right) = |u|^{p-1}u
&\qquad \text{in}\ \Omega, \\
u=0 &\qquad \text{on}\ \partial \Omega,
\end{array}
\right.
\end{align*}
that is stable outside a compact set $\mathcal K\subset\Omega$.
\end{itemize}
If
\begin{align*}
1<p\le p_S,
\end{align*}
then there exists $r_0>0$ such that
\begin{align*}
\int_{\Omega\setminus B_{r_0}} |u|^{p+1}<\infty
\quad \text{and} \quad
\int_{\Omega\setminus B_{r_0}}
\frac{|\nabla u|^2}{\sqrt{1+|\nabla u|^2}}<\infty.
\end{align*}
\end{lemma}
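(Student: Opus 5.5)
We will apply the gradient estimate with $a=1$, which is admissible in both cases: $a=1$ lies in $[1,a_+(p))$ because $a_+(p)=-1+2p+2\sqrt{p(p-1)}>1$ for every $p>1$. In case (ii) the admissible estimate is Lemma \ref{Lemma main gradient est Dirichlet zero boundary}. In case (i) it is Lemma \ref{Lemma main gradient est} applied on $\Omega'=\mathbb R^n\setminus\mathcal K$, where $u$ is stable. Its proof only uses the stability inequality and the weak formulation with test functions supported in $\Omega'$, so it holds for every $\phi\in C_c^\infty(\mathbb R^n\setminus\mathcal K)$ with $0\le\phi\le1$.

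Choose $r_0$ with $\mathcal K\subset B_{r_0}$. For $R>2r_0$ (and later $R\to\infty$), take a cut-off $\phi_R$ such that:
\begin{itemize}
\item $\phi_R=0$ on $B_{r_0}$ and $\phi_R=1$ on $B_R\setminus B_{2r_0}$;
\item $\phi_R=0$ outside $B_{2R}$;
\item $|\nabla\phi_R|\le C/r_0$ on the inner annulus $B_{2r_0}\setminus B_{r_0}$ and $|\nabla\phi_R|\le C/R$ on the outer annulus $B_{2R}\setminus B_R$.
\end{itemize}
Then $\phi_R\in C_c^\infty(\mathbb R^n\setminus\mathcal K)$, and with $a=1$ the estimate gives
\begin{align*}
\int_{\Omega\cap(B_R\setminus B_{2r_0})}\left(|u|^{p+1}+\frac{|\nabla u|^2}{\sqrt{1+|\nabla u|^2}}\right)
\le C\int_{\mathbb R^n}|\nabla\phi_R|^{\frac{2(p+1)}{p-1}}
\le C(r_0)+C R^{\,n-\frac{2(p+1)}{p-1}}.
\end{align*}

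The key point is that the exponent satisfies $n-\frac{2(p+1)}{p-1}\le 0$ precisely when $p\le p_S$. This is where the hypothesis $1<p\le p_S$ enters: for $n\ge3$, $\frac{2(p+1)}{p-1}\ge n$ is equivalent to $(n-2)p\le n+2$. In the equality case $p=p_S$ the outer term is bounded by a constant, not small, but it is still bounded uniformly in $R$, which is all we need. For $n=2$ (only relevant if allowed, since $p_S=\infty$) the exponent $2-\frac{2(p+1)}{p-1}$ is negative for every $p$, so the conclusion holds as well.

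Letting $R\to\infty$ by monotone convergence gives finiteness of both integrals over $\Omega\setminus B_{2r_0}$. Renaming $2r_0$ as $r_0$ yields the statement.

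The only delicate step is the case (i) extension of Lemma \ref{Lemma main gradient est} to the exterior region. I expect it to be immediate: the lemma is stated for an arbitrary domain $\Omega$, and $u$ restricted to $\mathbb R^n\setminus\mathcal K$ is a stable $C^2$ solution there. For a non-closed $\mathcal K$ one can take its closure.
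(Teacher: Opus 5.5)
Your proposal is correct and follows the paper's proof essentially verbatim. Both apply the gradient estimate with $a=1$: in case (i) Lemma \ref{Lemma main gradient est} with a cut-off supported away from $\mathcal K$, in case (ii) Lemma \ref{Lemma main gradient est Dirichlet zero boundary}. Both use a radial cut-off vanishing on $B_{r_0}\supset\mathcal K$, and both observe that $\frac{2(p+1)}{p-1}\ge n$ for $p\le p_S$, which keeps $\int|\nabla\phi_R|^{\frac{2(p+1)}{p-1}}$ bounded uniformly in $R$. The only difference is cosmetic: your inner transition annulus is $B_{2r_0}\setminus B_{r_0}$ rather than the paper's $B_{r_0+1}\setminus B_{r_0}$.
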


\begin{proof}
In either case,
we apply the corresponding gradient estimate,
namely Lemma \ref{Lemma main gradient est} in case (i) and Lemma \ref{Lemma main gradient est Dirichlet zero boundary} in case (ii),
with $a=1$.
Choose $r_0>0$ such that $\mathcal K\subset B_{r_0}$, and let $\phi_R\in C_c^\infty(\mathbb R^n \setminus \mathcal{K})$ be a radial cut-off function satisfying
\begin{align*}
0\le \phi_R\le1,\qquad
\phi_R=0\quad\text{in}\ B_{r_0},
\qquad
\phi_R=1\quad\text{in}\ B_R\setminus B_{r_0+1},
\qquad
\phi_R=0\quad\text{outside}\ B_{2R},
\end{align*}
with
\begin{align*}
|\nabla\phi_R|\le C
\quad\text{in}\ B_{r_0+1}\setminus B_{r_0},
\qquad
|\nabla\phi_R|\le \frac{C}{R}
\quad\text{in}\ B_{2R}\setminus B_R.
\end{align*}

In both cases, $\phi_R$ is admissible for the corresponding gradient estimate.
Indeed, in case (i),
although $u$ is only stable outside $\mathcal K$,
the cut-off function $\phi_R$ is supported in $\mathbb R^n\setminus B_{r_0}\subset \mathbb R^n\setminus \mathcal K$,
so the stability inequality is available for the test functions used in the proof of Lemma \ref{Lemma main gradient est}.
In case (ii), we directly apply Lemma \ref{Lemma main gradient est Dirichlet zero boundary}.
Setting
\begin{align*}
q:=\frac{2(p+1)}{p-1},
\end{align*}
we have $q\ge n$ for $1<p\le p_S$.
Hence the same gradient estimate holds,
\begin{align*}
\int_\Omega|\nabla\phi_R|^q
\le C+C R^{n-q}
\le C,
\end{align*}
where $C$ is independent of $R$.
Therefore, in either case, the corresponding gradient estimate gives
\begin{align*}
\int_{\Omega\cap(B_R\setminus B_{r_0+1})}|u|^{p+1}
+
\int_{\Omega\cap(B_R\setminus B_{r_0+1})}
\frac{|\nabla u|^2}{\sqrt{1+|\nabla u|^2}}
\le C.
\end{align*}
Letting $R\to\infty$, we obtain
\begin{align*}
\int_{\Omega\setminus B_{r_0+1}}|u|^{p+1} < \infty
\qquad \text{and} \qquad
\int_{\Omega\setminus B_{r_0+1}}
\frac{|\nabla u|^2}{\sqrt{1+|\nabla u|^2}} < \infty.
\end{align*}
Replacing $r_0+1$ by $r_0$ completes the proof.
\end{proof}

\subsection{Conservation law and Pohozaev-type identity}

The Pohozaev-type identity used below can be systematically understood from the perspective of stress--energy tensor conservation laws,
following the influential framework developed by Esteban and Lions \cite{Esteban Lions PRSE 1982} for semilinear elliptic problems in unbounded domains.
To motivate this, we briefly recall this connection for the classical Lane--Emden equation
\begin{align*}
-\Delta u=f(u)
\qquad\text{in}\ \Omega,
\end{align*}
and let
\begin{align*}
F(u)=\int_0^u f(s)\,ds.
\end{align*}
The equation is the Euler--Lagrange equation associated with the functional
\begin{align*}
\mathcal E_0[u]=\int_\Omega\left(\frac12|\nabla u|^2-F(u)\right).
\end{align*}
As systematically exploited in \cite{Esteban Lions PRSE 1982}, the associated stress--energy tensor given by
\begin{align*}
T=\nabla u \otimes \nabla u
-\left(\frac12|\nabla u|^2-F(u)\right)I
\end{align*}
satisfies the conservation law $\operatorname{div}T=0$ for any solution. Contracting this tensor with vector fields (such as constant vectors or the dilation vector field) and applying the divergence theorem yields the corresponding integral identities. More generally, such systematic variational identities and conservation laws were further developed for broad classes of variational problems by Pucci and Serrin \cite{Pucci Serrin Indiana 1986}.

A similar structure is present for the mean curvature equation.
The functional (\ref{Mean Curvature Equation Functional}) has (\ref{Mean Curvature Equation}) as its Euler--Lagrange equation.
The associated stress--energy tensor is
\begin{align*}
T = \frac{\nabla u \otimes \nabla u}{\sqrt{1+|\nabla u|^2}} - \left(\sqrt{1+|\nabla u|^2} -1 -\frac{1}{p+1}|u|^{p+1}\right)I,
\end{align*}
which likewise satisfies $\operatorname{div}T=0$.
Thus, the Pohozaev identities for the mean curvature equation arise from the same underlying conservation law. Related identities have previously been obtained for the mean curvature operator, particularly in the radial setting; see, for instance,
\cite{Ni Serrin CPAM 1986, Filippucci Zheng MathAnn 2024}.
We state the conservation law as follows:
\begin{lemma}[Conservation law for the mean curvature equation]\label{Lemma conservation law}
Let $f\in C^1(\mathbb{R})$ and let $u\in C^2(\Omega)$ be a solution of
\begin{align}\label{Mean Curvature Equation f(u)}
-\operatorname{div} \left(\frac{\nabla u}{\sqrt{1+|\nabla u|^2}}\right) = f(u)
\qquad \text{in}\ \Omega.
\end{align}
Let
\begin{align*}
F(t) = \int_0^t f(s) ds.
\end{align*}
Define the tensor field $T=(T_{ij})$ by
\begin{align*}
T_{ij} = \frac{u_i u_j}{\sqrt{1+|\nabla u|^2}} - \left(\sqrt{1+|\nabla u|^2} - 1 - F(u)\right)\delta_{ij}.
\end{align*}
Then
\begin{align*}
\operatorname{div} T = 0.
\end{align*}
\end{lemma}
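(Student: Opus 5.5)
We will verify the identity $\sum_i \partial_i T_{ij}=0$ for each fixed $j$ by direct differentiation, using the equation to rewrite one term. Throughout, set $W:=\sqrt{1+|\nabla u|^2}$. The equation (\ref{Mean Curvature Equation f(u)}) reads $\sum_i \partial_i\left(u_i/W\right)=-f(u)$. Since $u\in C^2(\Omega)$, all first derivatives of $T_{ij}$ exist and are continuous, so the computation is classical.

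First we differentiate the quadratic part. By the product rule,
\begin{align*}
\sum_i \partial_i\left(\frac{u_i u_j}{W}\right)
= u_j\sum_i \partial_i\left(\frac{u_i}{W}\right) + \sum_i \frac{u_i u_{ij}}{W}
= -f(u)\,u_j + \sum_i \frac{u_i u_{ij}}{W}.
\end{align*}

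Next we differentiate the scalar part. The diagonal term contributes
\begin{align*}
-\partial_j\left(W-1-F(u)\right) = -\frac{\sum_i u_i u_{ij}}{W} + F'(u)\,u_j = -\sum_i\frac{u_i u_{ij}}{W} + f(u)\,u_j .
\end{align*}
Here we used $\partial_j W = \sum_i u_iu_{ij}/W$, which relies on the symmetry of the Hessian, and $F'=f$.

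Adding the two contributions, the $f(u)u_j$ terms cancel, and so do the terms $\sum_i u_iu_{ij}/W$. This gives $(\operatorname{div}T)_j=0$ for every $j$.

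The only point requiring care is the regularity needed to expand $\partial_i(u_i/W)$ pointwise and to identify it with $-f(u)$. This is immediate for $u\in C^2$: the expression $u_i/W$ is $C^1$, and the weak form of the equation coincides with the pointwise one. The rest is bookkeeping, so we do not expect any genuine obstacle. The constant $-1$ in $T$ is irrelevant to the divergence and is included only so that $T$ is integrable when $\nabla u$ is small.
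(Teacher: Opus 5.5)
Your proof is correct and is essentially the paper's direct computation. You expand the divergence of the quadratic part using the equation, expand the derivative of the scalar part using $F'=f$ and $\partial_j W=\sum_i u_iu_{ij}/W$, and observe that both pairs of terms cancel. Contracting over the first index rather than the second, as the paper does, makes no difference because $T$ is symmetric.
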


\begin{proof}
It suffices to show that
\begin{align*}
\sum_{j=1}^n \partial_j T_{ij} = 0 \qquad \text{for}\ i=1,2,\cdots, n.
\end{align*}
By the equation (\ref{Mean Curvature Equation f(u)}), we have
\begin{align*}
\sum_j \partial_j \left(\frac{u_i u_j}{\sqrt{1+|\nabla u|^2}}\right)
&= \sum_j \frac{u_{ij} u_j}{\sqrt{1+|\nabla u|^2}}
+ u_i \sum_j \partial_j\left(\frac{u_j}{\sqrt{1+|\nabla u|^2}}\right) \notag\\
&=\frac{\partial_i \left(|\nabla u|^2\right)}{2\sqrt{1+|\nabla u|^2}}
-u_i f(u)
\end{align*}
On the other hand, we have
\begin{align*}
\sum_{j=1}^n\partial_j
\left[
\left(
\sqrt{1+|\nabla u|^2}-1-F(u)
\right)\delta_{ij}
\right]
&=
\partial_i
\left(
\sqrt{1+|\nabla u|^2}-1-F(u)
\right)\\
&=
\frac{\partial_i \left(|\nabla u|^2\right)}
{2\sqrt{1+|\nabla u|^2}}
-f(u)u_i,
\end{align*}
where we have used $F'=f$.
Subtracting the two identities gives
\begin{align*}
\sum_j \partial_j T_{ij}
= \sum_j \partial_j \left(\frac{u_i u_j}{\sqrt{1+|\nabla u|^2}}\right)
-\sum_j \partial_j \left(\left(\sqrt{1+|\nabla u|^2} -1 -F(u)\right) \delta_{ij}\right) = 0.
\end{align*}
Since $i=1,\dots,n$ was arbitrary, we conclude that
\begin{align*}
\operatorname{div}T=0
\qquad\text{in}\ \Omega.
\end{align*}
\end{proof}

For the arguments below, we shall need both a localized Pohozaev-type identity associated with the dilation vector field and a boundary Pohozaev-type identity associated with a constant vector field. We record them in the following two lemmas.

\begin{lemma}\label{Lemma poho}
Let $p>1$, and $\phi\in C_c^\infty(\Omega)$.
Suppose $u \in C^2(\Omega)$ is a solution of (\ref{Mean Curvature Equation}).
Then
\begin{align*}
&\int_\Omega \frac{|\nabla u|^2}{\sqrt{1+|\nabla u|^2}} \phi
- n\int_\Omega \left(\sqrt{1+|\nabla u|^2}-1\right)\phi \notag\\
& = - \frac{n}{p+1} \int_\Omega |u|^{p+1}\phi
- \int_\Omega \frac{(x\cdot \nabla u)(\nabla u \cdot \nabla\phi)}{\sqrt{1+|\nabla u|^2}} \notag\\
&\qquad + \int_\Omega\left(\sqrt{1+|\nabla u|^2}-1\right)(x\cdot \nabla\phi)
- \frac{1}{p+1} \int_\Omega |u|^{p+1}(x\cdot \nabla\phi)
\end{align*}
\end{lemma}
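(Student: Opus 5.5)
The plan is to contract the conservation law of Lemma \ref{Lemma conservation law} with the dilation vector field $x$, multiply by $\phi$, and integrate by parts. Take $f(u)=|u|^{p-1}u$, so $F(u)=\frac{1}{p+1}|u|^{p+1}$, and write
\[
T_{ij}=\frac{u_iu_j}{\sqrt{1+|\nabla u|^2}}-G\,\delta_{ij},
\qquad
G:=\sqrt{1+|\nabla u|^2}-1-\frac{1}{p+1}|u|^{p+1}.
\]
Since $\operatorname{div}T=0$, the vector field $V_j=\sum_i x_iT_{ij}$ satisfies
\[
\operatorname{div}V=\sum_{i,j}\delta_{ij}T_{ij}=\operatorname{tr}T=\frac{|\nabla u|^2}{\sqrt{1+|\nabla u|^2}}-nG .
\]

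Next, because $\phi\in C_c^\infty(\Omega)$ and $u\in C^2(\Omega)$, the field $V$ is $C^1$. The divergence theorem then gives $\int_\Omega \phi\,\operatorname{div}V=-\int_\Omega V\cdot\nabla\phi$, with no boundary terms. Computing
\[
V\cdot\nabla\phi=\frac{(x\cdot\nabla u)(\nabla u\cdot\nabla\phi)}{\sqrt{1+|\nabla u|^2}}-G\,(x\cdot\nabla\phi),
\]
we obtain
\[
\int_\Omega\Big(\frac{|\nabla u|^2}{\sqrt{1+|\nabla u|^2}}-nG\Big)\phi
=-\int_\Omega\frac{(x\cdot\nabla u)(\nabla u\cdot\nabla\phi)}{\sqrt{1+|\nabla u|^2}}+\int_\Omega G\,(x\cdot\nabla\phi).
\]

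Finally, expand $G$ and move the term $\frac{n}{p+1}\int|u|^{p+1}\phi$ to the right-hand side. This produces the stated identity exactly: on the left, $-n\int(\sqrt{1+|\nabla u|^2}-1)\phi$; on the right, $-\frac{n}{p+1}\int|u|^{p+1}\phi$ together with the two pieces of $\int G\,(x\cdot\nabla\phi)$.

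There is no real obstacle. The one point to watch is regularity: $T$ involves only first derivatives of $u$, but differentiating it in Lemma \ref{Lemma conservation law} uses $u\in C^2$, which is assumed. The rest is sign bookkeeping in the trace computation.
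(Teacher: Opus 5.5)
Your proposal is correct and is essentially the paper's proof. Both use $\operatorname{div}T=0$ from Lemma \ref{Lemma conservation law} to integrate $\operatorname{div}(\phi\,Tx)=(Tx)\cdot\nabla\phi+\phi\operatorname{tr}T$ over $\Omega$ with no boundary terms, then expand $Tx$ and $\operatorname{tr}T$; your signs and the final rearrangement match the stated identity.
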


\begin{proof}[Proof of Lemma \ref{Lemma poho}]

Let
\begin{align}\label{def f F}
f(t) = |t|^{p-1}t \quad \text{and} \quad F(t) = \frac{|t|^{p+1}}{p+1}.
\end{align}
Then from Lemma \ref{Lemma conservation law}, we have $\operatorname{div}T=0$.
Hence
\begin{align*}
\operatorname{div}(\phi Tx)
&= (Tx)\cdot\nabla\phi + \phi ( x\cdot \operatorname{div}T) + \phi \operatorname{tr}T \\
&= (Tx)\cdot\nabla\phi + \phi \operatorname{tr}T
\end{align*}
Integrating over $\Omega$ and using $\phi\in C_c^\infty(\Omega)$ gives
\begin{align}\label{d9f411d5a}
\int_\Omega
(Tx)\cdot\nabla\phi
+
\int_\Omega
\phi\,\operatorname{tr}T
=0.
\end{align}
Since
\begin{align*}
Tx = \frac{(\nabla u\cdot x)\nabla u}{\sqrt{1+|\nabla u|^2}} - \left( \sqrt{1+|\nabla u|^2} -1 - \frac{|u|^{p+1}}{p+1} \right) x
\end{align*}
and
\begin{align*}
\operatorname{tr}T
=\frac{|\nabla u|^2}{\sqrt{1+|\nabla u|^2}}
- n \left(\sqrt{1+|\nabla u|^2}-1-\frac{|u|^{p+1}}{p+1}\right),
\end{align*}
the stated identity follows immediately from (\ref{d9f411d5a}).
\end{proof}

\begin{lemma}\label{Lemma poho constant}
Let $X \in \mathbb{R}^n$.
Suppose $\Omega\subset \mathbb{R}^n$ is a domain with smooth boundary and $u\in C^2(\overline{\Omega})$ is a solution of (\ref{Mean Curvature Equation}).
Then
\begin{align*}
&\int_{\partial (\Omega\cap B_R)} \left(\sqrt{1+|\nabla u|^2}-1\right)\left(X\cdot \nu\right) \notag\\
&=\frac{1}{p+1} \int_{\partial (\Omega\cap B_R)} |u|^{p+1}\left(X\cdot \nu\right)
+\int_{\partial (\Omega\cap B_R)} \frac{\left(X\cdot \nabla u\right)\left(\nabla u \cdot \nu\right)}{\sqrt{1+|\nabla u|^2}},
\end{align*}
for a.e. $R>0$,
where $\nu$ denotes the outward unit normal to $\partial(\Omega \cap B_R)$.
\end{lemma}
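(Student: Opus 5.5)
The identity is the divergence theorem applied to the vector field obtained by contracting the stress--energy tensor of Lemma \ref{Lemma conservation law} with the constant vector $X$. Take $f(t)=|t|^{p-1}t$ and $F(t)=|t|^{p+1}/(p+1)$ as in (\ref{def f F}). Since $p>1$, $f\in C^1(\mathbb R)$, so Lemma \ref{Lemma conservation law} applies and gives $\operatorname{div}T=0$ in $\Omega$. Because $T$ is symmetric and $X$ is constant,
\begin{align*}
\operatorname{div}(TX)=\sum_{i,j}\partial_j(T_{ij}X_i)=X\cdot\operatorname{div}T=0 .
\end{align*}
Explicitly,
\begin{align*}
TX=\frac{(X\cdot\nabla u)\nabla u}{\sqrt{1+|\nabla u|^2}}-\Bigl(\sqrt{1+|\nabla u|^2}-1-\frac{|u|^{p+1}}{p+1}\Bigr)X ,
\end{align*}
and since $u\in C^2(\overline\Omega)$, this field is $C^1$ up to $\partial\Omega$.

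Next I integrate $\operatorname{div}(TX)=0$ over the bounded set $\Omega\cap B_R$ and apply the divergence theorem. This gives $\int_{\partial(\Omega\cap B_R)}TX\cdot\nu=0$. Substituting the expression for $TX$ and rearranging yields exactly the stated identity: the term $(\sqrt{1+|\nabla u|^2}-1)(X\cdot\nu)$ goes on the left, and the $|u|^{p+1}$ term and the $(X\cdot\nabla u)(\nabla u\cdot\nu)$ term go on the right.

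The only delicate point is justifying the divergence theorem on $\Omega\cap B_R$, which is where the restriction to a.e.\ $R$ enters. The set $\Omega\cap B_R$ need not be Lipschitz, since $\partial B_R$ may meet $\partial\Omega$ tangentially. I would handle this in one of two ways.

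\emph{Via Sard's theorem.} Apply Sard's theorem to $|x|^2$ restricted to the smooth manifold $\partial\Omega$. For a.e.\ $R$, the sphere $\partial B_R$ intersects $\partial\Omega$ transversally, so $\Omega\cap B_R$ is a Lipschitz domain and the divergence theorem holds directly.

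\emph{Via a radial cutoff.} Integrate $\operatorname{div}(\psi_\varepsilon TX)$ over $\Omega$, where $\psi_\varepsilon(|x|)$ is a Lipschitz cutoff equal to $1$ on $B_{R-\varepsilon}$ and $0$ outside $B_R$. Since $\Omega$ has smooth boundary and $\psi_\varepsilon$ has compact support, the divergence theorem on $\Omega$ is legitimate. Letting $\varepsilon\to0$, the coarea formula shows that the term $\int \psi_\varepsilon'\,TX\cdot\frac{x}{|x|}$ converges to the spherical integral over $\Omega\cap\partial B_R$ at every Lebesgue point $R$ of the function $r\mapsto\int_{\Omega\cap\partial B_r}TX\cdot\nu$. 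The remaining boundary integral over $\partial\Omega\cap B_R$ converges by dominated convergence.

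Either route gives the identity for a.e.\ $R>0$.
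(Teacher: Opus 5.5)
Your proposal is correct and follows the paper's proof: you contract the conservation law of Lemma \ref{Lemma conservation law} with the constant vector $X$ to get $\operatorname{div}(TX)=0$, apply the divergence theorem on $\Omega\cap B_R$ for those $R$ at which $\partial B_R$ meets $\partial\Omega$ transversally, and expand $(TX)\cdot\nu$. Your Sard's-theorem argument (applied to $|x|^2$ on $\partial\Omega$) supplies the justification that the paper only asserts, and your radial-cutoff/coarea variant is also valid, with the small advantage that it never needs $\Omega\cap B_R$ to be Lipschitz.
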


\begin{proof}
Let $f$ and $F$ be the same as in (\ref{def f F}).
Then by Lemma \ref{Lemma conservation law} we have $\operatorname{div}T=0$.
Noting that $X$ is constant, we get
\begin{align*}
\operatorname{div}(TX)=0.
\end{align*}
Since $\partial\Omega$ is smooth, for a.e. $R>0$, $\partial B_R$ intersects $\partial\Omega$ transversally, making $\Omega\cap B_R$ a Lipschitz domain. Applying the divergence theorem on $\Omega\cap B_R$, we obtain
\begin{align*}
\int_{\partial(\Omega\cap B_R)}
(TX)\cdot\nu=0.
\end{align*}
Expanding $(TX)\cdot\nu$ gives
\begin{align*}
(TX)\cdot\nu = -\left(
\sqrt{1+|\nabla u|^2}-1-\frac{|u|^{p+1}}{p+1}
\right)(X\cdot\nu)
+\frac{(X\cdot\nabla u)(\nabla u\cdot\nu)}
{\sqrt{1+|\nabla u|^2}},
\end{align*}
which yields the desired identity.
\end{proof}

\section{The Nonexistence of Stable Solutions to the Mean Curvature Equation}

\begin{proof}[Proof of Theorem \ref{Theorem stable}]
Let $\eta \in C^\infty([0,\infty))$ be a cut-off function satisfying
\begin{align}\label{def eta}
0\leq \eta\leq 1,
\quad \eta(r) = 1 \ \text{for}\ 0\leq r\leq 1,
\quad |\eta'(r)| \leq C \ \text{for}\ 0\leq r\leq 1,
\quad \eta(r) = 0\ \text{for}\ r\geq 2,
\end{align}
For $R>0$, define
\begin{align*}
\phi_R(x) = \eta\left(\frac{|x|}{R}\right).
\end{align*}
Then
\begin{align*}
\operatorname{supp} \nabla\phi_R \subset B_{2R}\setminus B_{R},
\qquad|\nabla\phi_R(x)| \leq \frac{C}{R}.
\end{align*}
By Lemma \ref{Lemma main gradient est} with $\phi_R$,
for every
\begin{align*}
a\in\left[1, a_+(p)\right),
\end{align*}
there exists a positive integer $m=m(p,a)$ and a constant $C=C(n,p,a)$ such that
\begin{align*}
\int_{\mathbb R^n}|u|^{p+a}\phi_R^{2m}
+
\int_{\mathbb R^n}
|u|^{a-1}
\frac{|\nabla u|^2}{\sqrt{1+|\nabla u|^2}}
\phi_R^{2m}
\le
C\int_{\mathbb R^n}|\nabla\phi_R|^{\frac{2(p+a)}{p-1}}.
\end{align*}
Since $\phi_R=1$ in $B_R$, we obtain
\begin{align}\label{08ae787c1}
\int_{B_R} |u|^{p+a}
+ \int_{B_R} |u|^{a-1} \frac{|\nabla u|^2}{\sqrt{1+|\nabla u|^2}}
\leq C_{n,p,a} R^{n-\frac{2(p+a)}{p-1}}.
\end{align}

Suppose that there exists $a\in\left[1, a_+(p)\right)$ such that
\begin{align}\tag{A1}\label{A1}
\frac{2(p+a)}{p-1} > n.
\end{align}
Then the exponent on the right-hand side of (\ref{08ae787c1}) is negative.
Letting $R\rightarrow \infty$ we obtain
\begin{align*}
\int_{\mathbb{R}^n} |u|^{p+a} = 0,
\end{align*}
and hence $u \equiv 0$.

Therefore, it remains to prove that for every $1 < p < p_{JL}$,
we may choose $a\in\left[1, a_+(p)\right)$ such that (\ref{A1}) is satisfied.
The condition (\ref{A1}) is equivalent to
\begin{align*}
a > \frac{1}{2} (np -2p -n) =: a_0.
\end{align*}
As long as $a_0 < a_+(p)$ holds, the interval $(\max\{1, a_0\}, a_+(p))$ is well-defined and nonempty. Consequently, any choice of $a$ within this interval will simultaneously satisfy both $a \in (1, a_+(p))$ and condition (\ref{A1}).
Thus, it suffices to prove
\begin{align}\tag{A2}\label{A2}
a_0 < a_+(p).
\end{align}
The condition (\ref{A2}) is equivalent to
\begin{align}\tag{A3}\label{A3}
(n-6)p - (n-2) < 4\sqrt{p(p-1)}
\end{align}
We distinguish two cases.
\begin{itemize}
\item
{\bf Case $2\leq n\leq 10$.}
In this range, $p_{JL} = +\infty$,
so it suffices to prove (\ref{A3}) for all $p>1$.
We have
\begin{align*}
(n-6)p - (n-2)
& = (p-1)n -6p+2 \notag\\
&\leq (p-1)10 -6p+2 \notag\\
&= 4p-8.
\end{align*}
We split the proof into two subcases.
\begin{itemize}
\item
If $1<p<2$,
then $4p-8<0$,
and hence $(n-6)p - (n-2)<0$,
whereas $4\sqrt{p(p-1)}>0$.
So (\ref{A3}) holds trivially.
\item
If $p\geq 2$,
then
\begin{align*}
(4p-8)^2 < 16p(p-1)
\end{align*}
because this is equivalent to $p>\frac{4}{3}$.
Since both sides are nonnegative, squaring gives
\begin{align*}
4p-8<4\sqrt{p(p-1)}
\end{align*}
and hence (\ref{A3}) follows.
\end{itemize}
Therefore (\ref{A3}) holds for all $p>1$ when $2\leq n\leq 10$.

\item {Case $n\geq 11$.}
First, we claim that
\begin{align*}
&p_{JL} > \frac{n-2}{n-6}.
\end{align*}
Indeed,
\begin{align*}
p_{JL} - \frac{n-2}{n-6}
&=\frac{(n^2-8n+4+8\sqrt{n-1})(n-6) - (n-2)^2(n-10)}{(n-2)(n-10)(n-6)}\\
&=\frac{8 \left[ n+2+(n-6)\sqrt{n-1} \right]}{(n-2)(n-10)(n-6)} \\
&> 0.
\end{align*}
We split the proof into two subcases.
\begin{itemize}
\item {case $1<p<\frac{n-2}{n-6}$.}
Then the left-hand side of (\ref{A3}) is negative,
while the right-hand side is positive;
therefore, (\ref{A3}) holds immediately.

\item {case $\frac{n-2}{n-6}\leq p< p_{JL}$.}
In this case, both sides of (\ref{A3}) are nonnegative,
so (\ref{A3}) is equivalent to
\begin{align*}
\left((n-6)p - (n-2)\right)^2 < 16 p(p-1)
\end{align*}
or equivalently,
\begin{align}\label{d9323a29c}
(n-2)(n-10)p^2 -2(n^2-8n+4)p +(n-2)^2 <0
\end{align}
A direct computation shows that the discriminant $\Delta$ of the quadratic in (\ref{d9323a29c}) is
\begin{align*}
\Delta = 4(n^2-8n+4)^2-4(n-2)^3(n-10) = 256(n-1)>0
\end{align*}

Hence the quadratic has two distinct real roots,
given by
\begin{align*}
p^- := \frac{n^2-8n+4 - 8\sqrt{n-1}}{(n-2)(n-10)},
\qquad p^+ := p_{JL} = \frac{n^2-8n+4 + 8\sqrt{n-1}}{(n-2)(n-10)}.
\end{align*}
Since the quadratic is negative between its roots,
(\ref{d9323a29c}) holds for
\begin{align*}
p^- < p < p_{JL}.
\end{align*}
It remains to verify that
\begin{align*}
\frac{n-2}{n-6} > p^-
\end{align*}
Indeed,
\begin{align}\label{59f033067}
\frac{n-2}{n-6} - p^-
&= \frac{(n-2)^2(n-10) - (n-6) \left(n^2-8n+4 - 8\sqrt{n-1}\right)}{(n-6)(n-2)(n-10)} \notag\\
& = \frac{8\left[ (n-6)\sqrt{n-1} -(n+2) \right]}{(n-6)(n-2)(n-10)}.
\end{align}
Since
\begin{align*}
(n-6)^2 (n-1) - (n+2)^2 = (n-10)(n-2)^2 > 0,
\end{align*}
for $n\geq 11$,
the numerator on the right-hand side of (\ref{59f033067}) is positive.
Therefore
$\frac{n-2}{n-6} > p^-$.
Hence, for every
\begin{align*}
\frac{n-2}{n-6} \leq p <p_{JL},
\end{align*}
we have $p^- < p < p_{JL}$,
and consequently (\ref{A3}) holds.
\end{itemize}
Combining both subcases,
we have proved that (\ref{A3}) is satisfied for all $1<p<p_{JL}$ when $n\geq 11$.
\end{itemize}

Thus (\ref{A3}) holds for every $1<p<p_{JL}$, and consequently there exists
$a\in[1,a_+(p))$ satisfying (\ref{A1}). Letting $R\to\infty$ in
(\ref{08ae787c1}) then yields $u\equiv0$.
This proves Theorem \ref{Theorem stable}.
\end{proof}

\begin{proof}[Proof of Theorem \ref{Theorem stable outside compact}]
Let $\phi \in C_c^\infty(\mathbb{R}^n)$.
Using the test function $\varphi = u \phi^2$,
which is admissible by a standard density argument,
in the weak equation (\ref{mean curvature weak equation}), we get
\begin{align}\label{b22636d3b}
\int_{\mathbb{R}^n} \frac{|\nabla u|^2}{\sqrt{1+|\nabla u|^2}} \phi^2
+ 2 \int_{\mathbb{R}^n} \frac{u\phi (\nabla u\cdot \nabla\phi)}{\sqrt{1+|\nabla u|^2}}
=\int_{\mathbb{R}^n} |u|^{p+1}\phi^2
\end{align}
On the other hand, applying Lemma \ref{Lemma poho} with $\Omega = \mathbb{R}^n$ and the test function $\phi^2$ yields
\begin{align*}
&\int_{\mathbb{R}^n} \frac{|\nabla u|^2}{\sqrt{1+|\nabla u|^2}} \phi^2
- n\int_{\mathbb{R}^n} \left(\sqrt{1+|\nabla u|^2}-1\right)\phi^2 \notag\\
& = - \frac{n}{p+1} \int_{\mathbb{R}^n} |u|^{p+1}\phi^2
- 2\int_{\mathbb{R}^n}\frac{(x\cdot \nabla u)(\nabla u \cdot \nabla\phi)\phi^2}{\sqrt{1+|\nabla u|^2}} \notag\\
&\qquad + 2\int_{\mathbb{R}^n}\left(\sqrt{1+|\nabla u|^2}-1\right)(x\cdot \nabla\phi) \phi
-\frac{2}{p+1} \int_{\mathbb{R}^n} |u|^{p+1} (x\cdot \nabla\phi) \phi
\end{align*}
Combining this identity with (\ref{b22636d3b}) to eliminate the term $|u|^{p+1}\phi^2$, we obtain
\begin{align}\label{Pohozaev identity with test function}
&\left(1+\frac{n}{p+1}\right) \int_{\mathbb{R}^n} \frac{|\nabla u|^2}{\sqrt{1+|\nabla u|^2}} \phi^2
- n \int_{\mathbb{R}^n} \left(\sqrt{1+|\nabla u|^2}-1\right) \phi^2 \notag\\
&= -\frac{2n}{p+1} \int_{\mathbb{R}^n} \frac{u\phi (\nabla u\cdot \nabla\phi)}{\sqrt{1+|\nabla u|^2}}
- 2\int_{\mathbb{R}^n} \frac{(x\cdot\nabla u)(\nabla u\cdot\nabla\phi)\phi}{\sqrt{1+|\nabla u|^2}} \notag\\
&\qquad + 2\int_{\mathbb{R}^n} \left(\sqrt{1+|\nabla u|^2}-1\right)(x\cdot\nabla\phi) \phi
- \frac{2}{p+1} \int_{\mathbb{R}^n} |u|^{p+1}(x\cdot\nabla\phi) \phi
\end{align}

Next we show that the right-hand side of (\ref{Pohozaev identity with test function}) tends to $0$ by choosing a suitable $\phi=\phi_R$ as $R\rightarrow \infty$.
To this end, we estimate each term.
We choose a radial cut-off
\begin{align*}
\phi_R(x)=\eta\left( \frac{|x|}{R} \right),
\end{align*}
where $\eta\in C^\infty([0,\infty))$ is the same cut-off function defined in (\ref{def eta}).
Thus
\begin{align}\label{4daababba}
\operatorname{supp}\nabla\phi_R
\subset B_{2R}\setminus B_R,
\qquad
|\nabla\phi_R|\leq \frac{C}{R}.
\end{align}
Using the generalized H\"older inequality
\begin{align*}
\int XYZ \leq
\left(\int X^2\right)^\frac{1}{2}
\left(\int Y^{p+1}\right)^\frac{1}{{p+1}}
\left(\int Z^\gamma\right)^\frac{1}{\gamma}
\end{align*}
where
\begin{align*}
\gamma = \frac{2(p+1)}{p-1},
\qquad X = \frac{|\nabla u|}{(1+|\nabla u|^2)^{1/4}},
\qquad Y=|u|
\qquad \text{and}\qquad Z=|\nabla \phi_R|\phi_R,
\end{align*}
we obtain
\begin{align*}
&\int_{\mathbb{R}^n} \frac{u\phi_R (\nabla u\cdot \nabla\phi_R)}{\sqrt{1+|\nabla u|^2}} \notag\\
&\leq \int_{B_{2R}\setminus B_{R}} \frac{|\nabla u||u| |\nabla\phi_R|\phi_R}{(1+|\nabla u|^2)^{1/4}} \notag\\
&\leq \left(\int_{B_{2R}\setminus B_{R}} \frac{|\nabla u|^2}{\sqrt{1+|\nabla u|^2}}\right)^\frac{1}{2}
\left(\int_{B_{2R}\setminus B_{R}} |u|^{p+1}\right)^{\frac{1}{p+1}}
\left(\int_{B_{2R}\setminus B_{R}} |\nabla\phi_R|^\gamma\phi_R^\gamma \right)^{\frac{1}{\gamma}}
\end{align*}
For $n>2$, since $p=(n+2)/(n-2)$, we have $\gamma=n$.
Therefore, using (\ref{4daababba}) and $|B_{2R}\setminus B_R|\le CR^n$ we have
\begin{align*}
\left(
\int_{B_{2R}\setminus B_R}
|\nabla\phi_R|^\gamma\phi_R^\gamma
\right)^{1/\gamma}
\le
C R^{\frac n\gamma-1}
\le C,
\end{align*}
Consequently,
\begin{align}\label{921433a99}
\left| \int_{\mathbb{R}^n} \frac{u\phi_R(\nabla u\cdot\nabla\phi_R)} {\sqrt{1+|\nabla u|^2}}\right|
\leq C\left( \int_{B_{2R}\setminus B_R} \frac{|\nabla u|^2} {\sqrt{1+|\nabla u|^2}} \right)^{1/2}
\left( \int_{B_{2R}\setminus B_R}|u|^{p+1} \right)^{\frac1{p+1}}.
\end{align}
Moreover, noting that $x$ is comparable to $R$ in $B_{2R}\setminus B_R$,
we have
\begin{align}\label{7bbdf943b}
\left| \int_{\mathbb{R}^n} \frac{(x\cdot\nabla u)(\nabla u\cdot\nabla\phi_R)\phi_R}{\sqrt{1+|\nabla u|^2}} \right|
\leq C \int_{B_{2R}\setminus B_{R}} \frac{|\nabla u|^2}{\sqrt{1+|\nabla u|^2}}
\end{align}
Using the elementary inequality
\begin{align}\label{elementary inequality}
\sqrt{1+t}-1\leq \frac{t}{\sqrt{1+t}} \qquad \forall t\geq 0,
\end{align}
we get
\begin{align}\label{00033cd9f}
\left| \int_{\mathbb{R}^n} \left(\sqrt{1+|\nabla u|^2}-1\right)(x\cdot\nabla\phi_R)\phi_R \right|
\leq C \int_{B_{2R}\setminus B_{R}} \frac{|\nabla u|^2}{\sqrt{1+|\nabla u|^2}}.
\end{align}
Similarly,
\begin{align}\label{b3f00123a}
\left| \int_{\mathbb{R}^n} |u|^{p+1}(x\cdot\nabla\phi_R) \phi_R \right|
\leq C \int_{B_{2R}\setminus B_{R}} |u|^{p+1}.
\end{align}
Since $p = p_S$ and $u$ is stable outside a compact set $\mathcal K\subset\mathbb R^n$,
using Lemma \ref{Lemma integrable}(i) we deduce that $|\nabla u|^2/\sqrt{1+|\nabla u|^2}$ and $|u|^{p+1}$ are integrable near infinity.
Hence the right-hand sides of (\ref{921433a99}) -- (\ref{b3f00123a}) tend to $0$ as $R\rightarrow \infty$.
If we replace $\phi$ in (\ref{Pohozaev identity with test function}) with $\phi_R$,
then the right-hand side of (\ref{Pohozaev identity with test function}) will tend to $0$ as $R \rightarrow \infty$.
Thus, letting $R\rightarrow \infty$ gives
\begin{align}\label{e315d48f6}
\left(1+\frac{n}{p+1}\right) \int_{\mathbb{R}^n} \frac{|\nabla u|^2}{\sqrt{1+|\nabla u|^2}} = n \int_{\mathbb{R}^n} \left(\sqrt{1+|\nabla u|^2}-1\right)
\end{align}
Define
\begin{align*}
A := 1+\frac{n}{p+1},
\qquad f(t) := (n-A)t + \frac{A}{t} - n, \quad t > 0.
\end{align*}
Then (\ref{e315d48f6}) is equivalent to
\begin{align}\label{d4b6b6633}
\int_{\mathbb{R}^n} f(\sqrt{1+|\nabla u|^2}) = 0
\end{align}
Since $n \ge 3$ and $p = p_S = (n+2)/(n-2)$, we have $A=n/2$.
Consequently,
\begin{align*}
f(t) = \frac{n(t-1)^2}{2t} \geq 0.
\end{align*}
with equality if and only if $t=1$.
Therefore from (\ref{d4b6b6633}) we obtain that
\begin{align*}
\sqrt{1+|\nabla u|^2}=1
\qquad\text{in}\ \mathbb{R}^n,
\end{align*}
and therefore
\begin{align*}
\nabla u\equiv 0\qquad\text{in}\ \mathbb{R}^n.
\end{align*}
It follows that $u$ is constant.
Substituting this into (\ref{Mean Curvature Equation}) gives $|u|^{p-1}u=0$,
and hence $u\equiv0$,
which completes the proof of Theorem \ref{Theorem stable outside compact}.
\end{proof}

\begin{proof}[Proof of Theorem \ref{Theorem epigraph}]
Let $R>0$.
We apply the Pohozaev-type identity from Lemma \ref{Lemma poho constant} to the domain $\Omega \cap B_R$,
\begin{align}\label{030fb9b7b}
\int_{\partial(\Omega \cap B_R )} \left(\sqrt{1+|\nabla u|^2}-1\right)\left(X\cdot \nu\right)
&=\frac{1}{p+1} \int_{\partial(\Omega \cap B_R )} |u|^{p+1}\left(X\cdot \nu\right) \notag\\
&\qquad +\int_{\partial(\Omega \cap B_R )} \frac{\left(X\cdot \nabla u\right)\left(\nabla u \cdot \nu\right)}{\sqrt{1+|\nabla u|^2}}
\end{align}
Here $\nu$ denotes the outward unit normal to $\partial(\Omega \cap B_R)$.
Now decompose $\partial(\Omega \cap B_R ) = (\partial \Omega \cap B_R) \cup (\Omega \cap \partial B_R)$.
Noting $u=0$ on $\partial \Omega$,
we have
\begin{align*}
\int_{\partial\Omega \cap B_R} |u|^{p+1}\left(X\cdot \nu\right) = 0.
\end{align*}
Moreover,
on $\partial \Omega$,
since $u=0$,
the gradient $\nabla u$ is parallel to the normal $\nu$.
Therefore
\begin{align*}
\int_{\partial\Omega \cap B_R} \frac{\left(X\cdot \nabla u\right)\left(\nabla u \cdot \nu\right)}{\sqrt{1+|\nabla u|^2}}
=\int_{\partial\Omega \cap B_R} \left(X\cdot \nu\right)\frac{|\nabla u|^2}{\sqrt{1+|\nabla u|^2}}
\end{align*}
Thus, from (\ref{030fb9b7b}), we have
\begin{align*}
&\int_{\partial \Omega \cap B_R} \left(\frac{|\nabla u|^2}{\sqrt{1+|\nabla u|^2}} -\sqrt{1+|\nabla u|^2} +1\right) \left(X\cdot \nu\right) \notag\\
&= \int_{\Omega \cap \partial B_R} \left(\sqrt{1+|\nabla u|^2}-1\right)\left(X\cdot \nu\right)
-\frac{1}{p+1} \int_{\Omega \cap \partial B_R} |u|^{p+1}\left(X\cdot \nu\right) \notag\\
&\qquad -\int_{\Omega \cap \partial B_R} \frac{\left(X\cdot \nabla u\right)\left(\nabla u \cdot \nu\right)}{\sqrt{1+|\nabla u|^2}}
\end{align*}
Absorbing $X$ and $\nu$ into a constant $C$ and applying elementary inequality (\ref{elementary inequality}) to the right-hand side of the above identity,
we obtain
\begin{align}\label{0ba95f35e}
&\int_{\partial \Omega \cap B_R} \left(\frac{|\nabla u|^2}{\sqrt{1+|\nabla u|^2}} -\sqrt{1+|\nabla u|^2} +1\right) \left(X\cdot \nu\right) \notag\\
&\leq C \int_{\Omega \cap \partial B_R} \left(\frac{|\nabla u|^2}{\sqrt{1+|\nabla u|^2}} +|u|^{p+1}\right).
\end{align}
Since $1<p\leq p_S$ and $u$ is stable outside a compact set $\mathcal K\subset \Omega$,
by Lemma \ref{Lemma integrable},
we obtain
$|u|^{p+1}$, $|\nabla u|^2 / (\sqrt{1+|\nabla u|^2)}$ are integrable near infinity.
Consequently, there exists a sequence $R_k\rightarrow \infty$ such that
\begin{align*}
\int_{\Omega \cap \partial B_{R_k}} \left(\frac{|\nabla u|^2}{\sqrt{1+|\nabla u|^2}} +|u|^{p+1}\right) \rightarrow 0
\end{align*}
Taking $R=R_k$ in (\ref{0ba95f35e}) and letting $k\rightarrow \infty$,
we obtain
\begin{align}\label{6159a508d}
\int_{\partial \Omega} \left(\frac{|\nabla u|^2}{\sqrt{1+|\nabla u|^2}} -\sqrt{1+|\nabla u|^2} +1\right) \left(X\cdot \nu\right) = 0
\end{align}
Noting $X\cdot \nu\geq 0$ on $\partial \Omega$ and using (\ref{elementary inequality}) again,
we conclude from (\ref{6159a508d}) that
\begin{align}\label{c07f21d6e}
\left(\frac{|\nabla u|^2}{\sqrt{1+|\nabla u|^2}} -\sqrt{1+|\nabla u|^2} +1\right)\left(X\cdot \nu\right) = 0 \qquad \text{on}\ \partial\Omega
\end{align}
Because $X\cdot\nu\not\equiv 0$ and $\partial\Omega$ is smooth,
there exists a relatively open subset $\Gamma\subset \partial \Omega$ on which $X \cdot \nu>0$.
Then (\ref{c07f21d6e}) implies
\begin{align*}
\frac{|\nabla u|^2}{\sqrt{1+|\nabla u|^2}} -\sqrt{1+|\nabla u|^2} +1 = 0 \qquad \text{on}\ \Gamma,
\end{align*}
which is equivalent to $\nabla u = 0$ on $\Gamma$.
Hence we get both $ u=0$ and $\nabla u = 0$ on $\partial\Omega\cap \Gamma$.
In the following, we show that such an overdetermined boundary condition forces $u\equiv 0$ in $\Omega$.
We use a unique continuation argument.
Rewrite the original equation as a linear elliptic equation
\begin{align}\label{linearized equation}
\operatorname{div}\left(A(x) \nabla u\right) + V(x) u = 0.
\end{align}
where
\begin{align*}
A(x) := \frac{I}{\sqrt{1+|\nabla u|^2}},
\qquad V(x) = |u|^{p-1}.
\end{align*}
Fix a point $y\in \Gamma$.
By reducing $\delta>0$ if necessary, we may assume that
\begin{align*}
\partial\Omega\cap B_\delta(y)\subset\Gamma.
\end{align*}

Choose $x_0\in\Omega\cap B_{\delta/2}(y)$.
Since $\Omega$ is connected, for any fixed $z\in\Omega$, there exists a continuous path $\gamma$ in $\Omega$ joining $x_0$ to $z$.
We may therefore choose a bounded connected open set $D\subset\Omega$ containing both $x_0$ and $z$.
Set
$$
U:=D\cup B_\delta(y).
$$
Then $U$ is a bounded connected open set.
Define the zero extension of $u$ to $U$ by
\begin{align*}
\widetilde{u} =
\left\{
\begin{array}{ll}
u, & x \in U \cap \Omega, \\
0, & x \in U \setminus \Omega.
\end{array}
\right.
\end{align*}

We next show that $\widetilde u$ is $C^2$ across $\Gamma$.
Since $u\in C^2(\overline{\Omega})$ and both $u$ and $\nabla u$ vanish on $\Gamma$,
all first-order tangential derivatives and all second-order derivatives involving at least one tangential direction vanish on $\Gamma$.
To show that the normal second derivative $u_{\nu\nu}$ also vanishes, we expand the original equation.
Using the product rule, the equation can be written as
\begin{align}\label{expanded equation}
-\nabla g \cdot \nabla u - g \Delta u = |u|^{p-1}u,
\qquad \text{where}\ g(x) := \frac{1}{\sqrt{1+|\nabla u|^2}}.
\end{align}
As $x$ approaches the boundary $\Gamma$,
we have $u \to 0$ and $\nabla u \to 0$.
The first term $\nabla g \cdot \nabla u$ vanishes because $\nabla g$ is bounded near $\Gamma$
(as $u$ is $C^2$ and $g$ is a smooth function of the bounded quantity $|\nabla u|$) and $\nabla u \rightarrow 0$.
Meanwhile, $g \to 1$ and $|u|^{p-1}u \to 0$.
Therefore, passing to the limit $x \to \Gamma$ in (\ref{expanded equation}),
the equation reduces to
\begin{align*}
\Delta u = 0 \qquad \text{on}\ \Gamma.
\end{align*}
It follows that $u_{\nu\nu}=0$ on $\Gamma$ and therefore $D^2u=0$ on $\Gamma$.
This ensures that the zero-extension $\widetilde{u}$ is $C^2$ across the boundary $\Gamma$.
Therefore, $\widetilde{u} \in C^2(\overline{U})$, and it satisfies the linear equation
\begin{align*}
\operatorname{div}\left(\widetilde{A}(x) \nabla \widetilde{u}\right) + \widetilde{V}(x) \widetilde{u} = 0 \qquad \text{in}\ U,
\end{align*}
where $\widetilde{A}(x)$ and $\widetilde{V}(x)$ are defined by replacing $u$ with $\widetilde{u}$ in the definitions of $A(x)$ and $V(x)$.

We now verify the hypotheses of the unique continuation theorem of Garofalo--Lin (Theorem 1.1 in \cite{Garofalo Lin CPAM 1987}) on the connected domain $U$:

\begin{itemize}
\item[1.]
{\bf Uniform ellipticity:}
Since $\widetilde{u} \in C^2(\overline{U})$,
there exists $M>0$ such that $|\nabla \widetilde{u}| \leq M$.
For every $\xi\in\mathbb{R}^n$,
\begin{align*}
\xi^{T}\widetilde{A}(x)\xi = \frac{|\xi|^2}{\sqrt{1+|\nabla \widetilde{u}|^2}}.
\end{align*}
Therefore
\begin{align*}
\frac{1}{\sqrt{1+M^2}} |\xi|^2 \leq \xi^{T}\widetilde{A}(x)\xi \leq |\xi|^2.
\end{align*}
This shows that $\widetilde{A}$ is uniformly elliptic.

\item[2.]
{\bf Lipschitz continuity of $A$:}
Since $\widetilde{u}\in C^2(\overline{U})$,
we have $\nabla \widetilde{u} \in C^1(\overline{U})$.
The map
\begin{align*}
t \mapsto \frac{1}{\sqrt{1+|t|^2}}
\end{align*}
is smooth.
Consequently, $\widetilde{A}(x) \in C^1(\overline{U})$,
and $\widetilde{A}(x)$ is Lipschitz on the compact set $\overline{U}$.

\item[3.]
{\bf Boundedness of $V$:}
Since $u\in C^2(\overline{U})$,
$V(x)$ is bounded on $\overline{U}$.
The condition (iii) in Garofalo--Lin \cite{Garofalo Lin CPAM 1987} is trivially satisfied for bounded potentials.
\end{itemize}
Therefore, the operator in (\ref{linearized equation}) satisfies the structural assumptions (i), (ii), and (iii) of Theorem 1.1 in \cite{Garofalo Lin CPAM 1987} for $n>2$.
By that theorem, the equation has the strong unique continuation property.
Since $\widetilde{u}\equiv 0$ in the nonempty open set $U \setminus \overline{\Omega}$,
we must have $\widetilde{u}\equiv 0$ in $U$, and hence $u(z) = 0$.
Since $z \in \Omega$ can be chosen arbitrarily,
we obtain $u\equiv 0$ in the whole connected domain $\Omega$.
This completes the proof of Theorem \ref{Theorem epigraph}.
\end{proof}

\textbf{Conflict of Interest Statement.}
The author declares that there is no conflict of interest.

\textbf{Data Availability Statement.}
No datasets were generated or analysed during the current study.


\begin{thebibliography}{99}
\bibitem{Caffarelli Gidas Spruck CPAM 1989} L.~\'A. Caffarelli, B. Gidas and J. Spruck, Asymptotic symmetry and local behavior of semilinear elliptic equations with critical Sobolev growth, Comm. Pure Appl. Math. {\bf 42} (1989), no.~3, 271--297; MR0982351
\bibitem{Damascelli Farina Sciunzi Valdinoci AIHP 2009} L. Damascelli, A. Farina, B. Sciunzi and E. Valdinoci, Liouville results for $m$-Laplace equations of Lane-Emden-Fowler type, Ann. Inst. H. Poincar\'e{} C Anal. Non Lin\'eaire {\bf 26} (2009), no.~4, 1099--1119; MR2542716
\bibitem{del Pino Guerra JDE 2007} M.~A. del~Pino and I.~A. Guerra, Ground states of a prescribed mean curvature equation, J. Differential Equations {\bf 241} (2007), no.~1, 112--129; MR2356212
\bibitem{Esteban Lions PRSE 1982} M.~J. Esteban and P.-L. Lions, Existence and nonexistence results for semilinear elliptic problems in unbounded domains, Proc. Roy. Soc. Edinburgh Sect. A {\bf 93} (1982/83), no.~1-2, 1--14; MR0688279
\bibitem{Farina JMPA 2007} A. Farina, On the classification of solutions of the semilinear equation on unbounded domains of $\Bbb R^N$, J. Math. Pures Appl. (9) {\bf 87} (2007), no.~5, 537--561; MR2322150
\bibitem{Farina Mercuri Willem CVPDE 2019} A. Farina, C. Mercuri and M. Willem, A Liouville theorem for the $p$-Laplacian and related questions, Calc. Var. Partial Differential Equations {\bf 58} (2019), no.~4, Paper No. 153, 13 pp.; MR3989957
\bibitem{Farina Navarro DCDS 2020} A. Farina and M.~A. Navarro, Some Liouville-type results for stable solutions involving the mean curvature operator: the radial case, Discrete Contin. Dyn. Syst. {\bf 40} (2020), no.~2, 1233--1256; MR4043671
\bibitem{Farina Sciunzi Valdinoci ASNSPS 2008} A. Farina, B. Sciunzi and E. Valdinoci, Bernstein and De Giorgi type problems: new results via a geometric approach, Ann. Sc. Norm. Super. Pisa Cl. Sci. (5) {\bf 7} (2008), no.~4, 741--791; MR2483642
\bibitem{Farina Sciunzi Vuono NA 2024} A. Farina, B. Sciunzi and D. Vuono, Liouville-type results for some quasilinear anisotropic elliptic equations, Nonlinear Anal. {\bf 238} (2024), Paper No. 113402, 15 pp.; MR4650838
\bibitem{Filippucci Zheng MathAnn 2024} R. Filippucci and Y. Zheng, Existence and nonexistence of solutions for the mean curvature equation with weights, Math. Ann. {\bf 390} (2024), no.~4, 5821--5861; MR4816123
\bibitem{Garofalo Lin CPAM 1987}N. Garofalo and F. Lin, Unique continuation for elliptic operators: a geometric-variational approach, Comm. Pure Appl. Math. {\bf 40} (1987), no.~3, 347--366; MR0882069
\bibitem{Gidas Spruck CPAM 1981} B. Gidas and J. Spruck, Global and local behavior of positive solutions of nonlinear elliptic equations, Comm. Pure Appl. Math. {\bf 34} (1981), no.~4, 525--598; MR0615628
\bibitem{Joseph Lundgren ARMA 1973} D.~D. Joseph and T.~S. Lundgren, Quasilinear Dirichlet problems driven by positive sources, Arch. Rational Mech. Anal. {\bf 49} (1972/73), 241--269; MR0340701
\bibitem{Ni Serrin CPAM 1986} W.-M. Ni and J.~B. Serrin Jr., Nonexistence theorems for singular solutions of quasilinear partial differential equations, Comm. Pure Appl. Math. {\bf 39} (1986), no.~3, 379--399; MR0829846
\bibitem{Ni Serrin RCMP 1985} W.-M. Ni and J.~B. Serrin Jr., Nonexistence theorems for quasilinear partial differential equations, Rend. Circ. Mat. Palermo (2) Suppl. No. 8 (1985), 171--185; MR0881397
\bibitem{Pucci Serrin Indiana 1986} P. Pucci and J.~B. Serrin Jr., A general variational identity, Indiana Univ. Math. J. {\bf 35} (1986), no.~3, 681--703; MR0855181
\bibitem{Veron Pohozaev JAM 2001} M.-F. Bidaut-V\'eron and S.~I. Pohozaev, Nonexistence results and estimates for some nonlinear elliptic problems, J. Anal. Math. {\bf 84} (2001), 1--49; MR1849197
\end{thebibliography}
\end{document}